\newcommand{\qbinom}[2]{\genfrac{[}{]}{0pt}{}{#1}{#2}}
\newtheorem{theorem}{Theorem}[section]
\newtheorem{lemma}[theorem]{Lemma}
\newtheorem{problem}[theorem]{Problem}
\newtheorem{claim}{Claim}
\newtheorem*{claim*}{Claim}
\title{\bf \Large }
\date{ }
\title{\bf \Large 
Non-uniform pairwise cross $t$-intersecting families\footnote{E-mail addresses: 
\url{ytli0921@hnu.edu.cn} (Y. Li),
\url{mathtzwu@163.com} (T. Wu) 
}}
\author{
{\small  Yongjiang Wu$^1$, \ \ Yongtao Li$^2$,  \ \ Tingzeng Wu$^{3,4}$, \ \ Lihua Feng$^1$}\\[2mm]
\small $^1$School of Mathematics and Statistics, HNP-LAMA, Central South University\\
 \small Changsha, Hunan, 410083, China\\ 
 \small $^2$Yau Mathematical Sciences Center, Tsinghua University, Beijing, 100084, China\\
 \small $^3$School of Mathematics and Statistics, Qinghai Minzu University\\
 \small Xining, Qinghai, 810007, China\\
\small $^4$Qinghai Institute of Applied Mathematics, Xining, Qinghai, 810007, China\\
}
\begin{document}
\maketitle
\begin{abstract}
Let $ n\geq  t\geq  1$ and $ \mathcal{A}_1, \mathcal{A}_2, \ldots, \mathcal{A}_m \subseteq 2^{[n]}$ be non-empty families. We say that they are pairwise cross $t$-intersecting if $|A_i\cap A_j|\geq  t$ holds for any $A_i\in \mathcal{A}_i$ and $A_j\in \mathcal{A}_j$ with $i\neq j$. 
In the case where $m=2$ and $\mathcal{A}_1=\mathcal{A}_2$, 
determining the maximum size $M(n,t)$ of a non-uniform $t$-intersecting family of sets over $[n]$ was solved by Katona (1964), and  enhanced by Frankl (2017), and recently by Li and Wu (2024).  
In this paper, we establish the following upper bound: if  $ \mathcal{A}_1, \mathcal{A}_2, \ldots, \mathcal{A}_m \subseteq 2^{[n]}$ are non-empty pairwise cross $t$-intersecting families, then
$$
\sum_{i=1}^m |\mathcal{A}_i| \leq \max \left\{ \sum_{k=t} ^{n}\binom{n}{k}  + m - 1, \, m M(n, t) \right\}. 
$$
 Furthermore, we provide a complete characterization of  the extremal families that achieve the bound.
Our result not only generalizes an old result of Katona (1964) for a single family, but also extends a theorem of Frankl and Wong (2021) for two families. Moreover, our result could be viewed as a non-uniform version of a recent theorem of Li and Zhang (2025). The key in our proof is to utilize the generating set method and the pushing-pulling method together.  \\

\noindent {\bf AMS Classification}:  05C65; 05D05\\[1mm]
\noindent {\bf Keywords}:  Cross $t$-intersecting families; Generating sets; Pushing-pulling method  
\end{abstract}

\section{Introduction}

Extremal set theory studies the maximum or minimum possible size of a family of sets under given constraints. The central problems in this field include determining the extremal size of set families that satisfy certain intersection, union, or exclusion conditions, and characterizing the optimal structures. Classical results include the Erd\H{o}s--Ko--Rado theorem, Sunflower lemma, and others, employing combinatorial, probabilistic, and algebraic methods. Extremal set theory has wide applications in theoretical computer science, graph theory, coding theory, and discrete geometry, with the aim of uncovering the limits and optimal configurations of set systems.  

We fix the following notation. 
For integers $m\leq n$, let  $[m,n]=\{m, m+1, \ldots, n\}$ and simply let $[n]=[1,n]$. Let $2^{[n]}$ denote the family of all subsets of $[n]$, and let  $\binom{[n]}{k}$ denote the family of all $k$-element subsets of $[n]$.  Let $\binom{[n]}{\leq k}=\{A\subseteq [n]: |A|\leq k\}$. A family $\mathcal{A}\subseteq 2^{[n]}$  is called $k$-\textit{uniform} if all sets of $\mathcal{A}$ contain $k$ elements, otherwise it is called \textit{non-uniform}. 
All families discussed in this study are assumed to be non-empty. 
 A family $\mathcal{A} \subseteq 2^{[n]}$ is called $t$-\textit{intersecting} if $|A\cap A^{\prime}|\geq  t$
 for all $A, A^{\prime}\in \mathcal{A}$. 
Two families $ \mathcal{A}, \mathcal{B} \subseteq 2^{[n]} $ are called  \textit{cross $t$-intersecting} if $|A\cap B|\geq  t$
 for all $A\in \mathcal{A}$ and $B\in \mathcal{B}$.
Moreover, the families $ \mathcal{A}_1, \mathcal{A}_2, \ldots, \mathcal{A}_m \subseteq 2^{[n]}$  are called  \textit{pairwise cross $t$-intersecting} if $|A_i\cap A_j|\geq  t$ for any $A_i\in \mathcal{A}_i$ and $A_j\in \mathcal{A}_j$ with $1\leq i<j\leq m$. 
For the case $t=1$, we abbreviate $1$-intersecting, cross $1$-intersecting and pairwise cross $t$-intersecting as intersecting, cross intersecting and pairwise cross intersecting, respectively.

\subsection{Uniform $t$-intersecting families}

Let $M(n, k, t)$ denote the maximum of $|\mathcal{A}|$ over all $\mathcal{A}\subseteq \binom{[n]}{k}$ having the $t$-intersecting property.
The classical Erd\H{o}s--Ko--Rado theorem \cite{E61} states that $M(n,k,1)={n-1 \choose k-1}$ for every $k\geq 2$ and $n\geq 2k$. 
For any $t\geq 2$ and $k\geq t$, there exists an integer $n_0(k,t)$ such that if $n>n_0(k, t)$, then 
\[ M(n, k, t)=\binom{n-t}{k-t}, \] 
and the unique extremal family is $\mathcal{A}=\{A\in {[n] \choose k}: T\subseteq A\}$ for some fixed $T\in {[n ] \choose t}$.  The original paper \cite{E61} established that $n_0(k,t) \leq (k-t) {k \choose t} {}^3 +t$. 
The smallest such $n_0(k,t)$ was conjectured to be $(t+1)(k-t+1)$, which was confirmed by Frankl \cite{F78} for $t\geq  15$ using the random walk method, and completely solved by Wilson \cite{W84} for all $t\geq 1$ by the algebraic method. 

For small integer $n$, Ahlswede and Khachatrian introduced the generating set method \cite{AK1997} and the pushing--pulling method \cite{AK1999}. 
They obtained the exact value of $M(n, k, t)$ for every $n < (t+1)(k-t+1)$. This solves a long-standing conjecture of Frankl \cite{F78}. 

\begin{theorem}[Ahlswede and Khachatrian \cite{AK1997,AK1999}] \label{AK1997}
Let $n, k, t, r$ be integers with $n \geq  k \geq  t\geq  1$ and $n>2k-t$. We denote $\mathcal{F}_r(n,k,t)  =\left\{A \in \binom{[n]}{k}:|A\cap[t+2r]| \geq  t+r\right\}$. Then
\begin{equation*} \label{eq-AK}
M(n,k, t)= \max_{0\leq r\leq {(n-t)}/{2} }\{| \mathcal{F}_r(n,k,t)|\}. 
\end{equation*} 
\end{theorem}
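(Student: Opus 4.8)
The plan is to prove the matching lower and upper bounds separately, the upper bound being the substantial direction and the only place where the generating-set and pushing--pulling machinery is needed. The lower bound is immediate: if $A,A'\in\mathcal{F}_r(n,k,t)$, then both meet $[t+2r]$ in at least $t+r$ elements, so inclusion--exclusion inside the $(t+2r)$-element segment gives $|A\cap A'\cap[t+2r]|\geqslant (t+r)+(t+r)-(t+2r)=t$, whence $|A\cap A'|\geqslant t$. Thus each $\mathcal{F}_r(n,k,t)$ is an admissible $t$-intersecting competitor, and $M(n,k,t)\geqslant \max_{0\leqslant r\leqslant(n-t)/2}|\mathcal{F}_r(n,k,t)|$. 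I would also record why $n>2k-t$ is the interesting regime: when $n\leqslant 2k-t$ every pair of $k$-sets already satisfies $|A\cap A'|\geqslant 2k-n\geqslant t$, so $M(n,k,t)=\binom{n}{k}$ trivially and no Frankl family is needed.

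For the upper bound I would proceed in two stages. The first is compression. I would introduce the shift operators $S_{ij}$ for $1\leqslant i<j\leqslant n$, which replace each $A\in\mathcal{A}$ containing $j$ but not $i$ by $(A\setminus\{j\})\cup\{i\}$ whenever the latter is not already present. The standard shifting lemma shows that $S_{ij}$ preserves $k$-uniformity and the cardinality $|\mathcal{A}|$ and does not destroy the $t$-intersecting property; iterating over all pairs terminates at a left-compressed (shifted) family of the same size. Hence it suffices to bound $|\mathcal{A}|$ for shifted $t$-intersecting $\mathcal{A}$, and from now on I assume $\mathcal{A}$ is shifted.

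The second stage is the structural analysis of shifted families via generating sets. For a shifted family I would pass to its generating sets $\mathcal{G}$, a compact collection of (possibly smaller) sets whose shift-closure recovers $\mathcal{A}$ exactly; because $\mathcal{A}$ is shifted, both $\mathcal{A}$ and $|\mathcal{A}|$ are completely determined by $\mathcal{G}$. The key structural lemma to establish is that shiftedness upgrades the $t$-intersecting condition into a strong intersection property of the generating sets on initial segments: for any two generating sets $G,G'$ the intersection $G\cap G'$ already contains at least $t$ elements below $\max(G\cup G')$. This is exactly what confines the essential combinatorial data to a bounded window of the form $[t+2r]$ and reduces the extremal problem to choosing the best window.

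The third stage, which I expect to be the main obstacle, is the pushing--pulling method that turns an optimal shifted family into one of the Frankl families $\mathcal{F}_r(n,k,t)$ without decreasing its size. Here I would design local exchange operations---pushing members of $\mathcal{A}$ toward the center $[t+2r]$ while pulling the generating sets accordingly---and show that each move is nondecreasing in $|\mathcal{A}|$ and that the process stabilizes at some $\mathcal{F}_r$, with the admissible range cut down to $0\leqslant r\leqslant(n-t)/2$ precisely by $n>2k-t$. The delicate point, and the genuine crux of the Ahlswede--Khachatrian argument, is verifying that these exchanges are size-nondecreasing in \emph{every} configuration; this requires the sharp initial-segment inequality on the generating sets from the previous stage together with an inductive control on the number and spread of the generating sets. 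Once the optimum is known to be attained at some $\mathcal{F}_r$, taking the maximum over $r$ gives the stated formula, and matching it against the lower bound completes the proof.
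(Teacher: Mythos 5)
The statement you were asked to prove is not proved in the paper at all: it is quoted as a known theorem of Ahlswede and Khachatrian with a citation standing in for the proof, and the paper only borrows its two techniques (generating sets and pushing--pulling) as tools for the non-uniform, multi-family setting. So there is no in-paper argument to compare against; your sketch follows the route of the original AK papers. Judged on its own terms, your first two stages are sound: the lower bound computation $|A\cap A'\cap[t+2r]|\geqslant 2(t+r)-(t+2r)=t$ is correct, as is the remark that $n\leqslant 2k-t$ trivializes the problem, and the reduction to shifted families via the operators $s_{i,j}$ is standard. One local inaccuracy: your ``key structural lemma'' is misstated. As written --- $G\cap G'$ contains at least $t$ elements \emph{below} $\max(G\cup G')$ --- it already fails for $G=G'=[t]$ if ``below'' is strict. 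The correct form of the AK lemma is that a shifted family generated by an antichain $\mathcal{G}$ is $t$-intersecting if and only if $|G_1\cap G_2\cap[\,|G_1|+|G_2|-t\,]|\geqslant t$ for all $G_1,G_2\in\mathcal{G}$; the window depends on the sizes of the generating sets, not on their maxima. (Compare Lemma 2.3 of the present paper, where boundary generating sets with $|A\cap B|=t$ are forced to satisfy $A\cup B=[\ell]$ and $|A|+|B|=t+\ell$, which is exactly this identity at the boundary.)

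The genuine gap is your third stage, which is where the entire content of the theorem lives and which you present as a plan rather than an argument. Saying you will ``design local exchange operations \ldots\ and show that each move is nondecreasing in $|\mathcal{A}|$ and that the process stabilizes at some $\mathcal{F}_r$'' is essentially a restatement of the conclusion: the crux of Ahlswede--Khachatrian is precisely the explicit counting that certifies each push or pull as size-nondecreasing, and this is not a generic monotonicity check. It requires the exact cardinality bookkeeping for how replacing a layer $\mathcal{G}[\ell]^{(u)}$ of boundary generating sets by $\mathcal{G}[\ell]^{(v)}(\ell)$ changes the up-set (the analog of Lemma 2.2 in this paper, with its two-sided description of $\langle\mathcal{D}\rangle\setminus\mathcal{A}$ and $\mathcal{A}\setminus\langle\mathcal{D}\rangle$), an averaging step to show one of the two opposite exchanges wins (as in Lemma 2.4), and a separate pushing--pulling analysis with binomial-coefficient inequalities such as $\binom{s}{a}\binom{s}{s+t-a}\geqslant\binom{s}{s+t-a-1}\binom{s}{a-1}$ to rule out non-extremal symmetric extents (as in Lemmas 2.6--2.9 here, or Ahlswede--Khachatrian's original case analysis, which moreover splits according to where $n$ sits relative to the critical values $(t+1)\bigl(1+\frac{k-t}{r+1}\bigr)$ and occupied two papers). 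Until those inequalities are actually stated and verified, the proposal establishes only the easy inequality $M(n,k,t)\geqslant\max_r|\mathcal{F}_r(n,k,t)|$ together with a reduction of the hard inequality to shifted families; the upper bound itself remains unproved.
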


To be more specific, it is easy to see that 
$|\mathcal{F}_r(n,k,t)| = \sum_{i=t+r}^{t+2r} {t+2r \choose i} {n-t-2r \choose k-i}$. 
For more related results for $t$-intersecting families, 
we refer to \cite{AK1996,BM2008, CLW2021, CLLW2022,ZW2025,FW2025} and references therein. 

In 1967, Hilton and Milner \cite{H67} pioneered the investigation of cross intersecting families in analogy to the celebrated Erd\H{o}s--Ko--Rado theorem, establishing that for non-empty cross intersecting families $\mathcal{A}, \mathcal{B} \subseteq \binom{[n]}{k}$ with $n \geq  2k$, the sharp bound $|\mathcal{A}| + |\mathcal{B}| \leq \binom{n}{k} - \binom{n-k}{k} + 1$ holds. This foundational result was generalized in \cite{F92,FT1998,Fra2024,FW2024-EUJC}. 
More generally, 
Wang and Zhang \cite{W13} determined the maximum sum of sizes of non-empty cross $t$-intersecting families. 

\begin{theorem}[Wang and Zhang \cite{W13}]
Let $n,k,t$ be positive integers with $k> t\geq  1$ and $n> 2k-t$. 
If $\mathcal{A}, \mathcal{B} \subseteq {[n] \choose k}$ 
 are non-empty cross $t$-intersecting families, then 
 \begin{equation*}
     \label{eq-WZ}
      |\mathcal{A}| + |\mathcal{B}| \leq {n \choose k} 
 - \sum_{i=0}^{t-1} {k \choose i} {n-k \choose k-i} +1. 
 \end{equation*}
 \end{theorem}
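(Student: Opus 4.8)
The plan is to prove the statement after a compression step and then reduce it to a single trade-off inequality concerning just one of the two families. Throughout write
$$
U=\binom{n}{k}-\sum_{i=0}^{t-1}\binom{k}{i}\binom{n-k}{k-i},
$$
which is exactly the number of $k$-sets meeting a fixed $k$-set in at least $t$ points; the goal is $|\mathcal A|+|\mathcal B|\leqslant U+1$. First I would apply the usual $(i,j)$-shift $S_{ij}$ ($i<j$) \emph{simultaneously} to $\mathcal A$ and $\mathcal B$. A routine check shows $S_{ij}$ preserves both sizes and the cross $t$-intersecting property, so we may assume $\mathcal A,\mathcal B$ are \emph{shifted}, i.e.\ downsets in the coordinatewise (dominance) order on $\binom{[n]}{k}$. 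A non-empty downset contains the dominance-minimum $[k]$, so $[k]\in\mathcal A\cap\mathcal B$. Since $[k]\in\mathcal A$, every $B\in\mathcal B$ has $|B\cap[k]|\geqslant t$; hence $\mathcal A,\mathcal B\subseteq\mathcal G:=\{G\in\binom{[n]}{k}:|G\cap[k]|\geqslant t\}$ with $|\mathcal G|=U$. This already gives $|\mathcal A|,|\mathcal B|\leqslant U$, but we need the sharper $|\mathcal A|+|\mathcal B|\leqslant U+1$.

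For the reduction, note that cross $t$-intersection forces $\mathcal B$ to avoid
$$
E(\mathcal A):=\bigl\{B\in\mathcal G:\ |A\cap B|\leqslant t-1\ \text{for some}\ A\in\mathcal A\bigr\},
$$
so $|\mathcal B|\leqslant U-|E(\mathcal A)|$, and the theorem follows once we prove the trade-off inequality $|E(\mathcal A)|\geqslant|\mathcal A|-1$ for every non-empty shifted $\mathcal A\subseteq\mathcal G$. Informally: enlarging $\mathcal A$ beyond the single set $[k]$ must expel at least as many sets from the admissible pool $\mathcal G$ available to $\mathcal B$.

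The engine is a witness lemma, and this is exactly where $n>2k-t$ enters. Given any $A\in\binom{[n]}{k}$ with $A\neq[k]$ (so $A$ omits at least one point of $[k]$), put $p=|[k]\setminus A|\geqslant 1$ and build $B$ by taking all of $[k]\setminus A$, adjoining the $t-p$ smallest points of $A\cap[k]$ if $p<t$ (ensuring $|B\cap[k]|\geqslant t$), and filling the remaining slots first from $[n]\setminus([k]\cup A)$ and only then, if those are exhausted, from $A\setminus[k]$. A short count gives $|B\cap A|\leqslant\max\{t-p,\,2k-n\}\leqslant t-1$, the estimate $2k-n\leqslant t-1$ being precisely the hypothesis $n>2k-t$. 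Thus $B\in E(\mathcal A)$ whenever $A\in\mathcal A$, so $E(\mathcal A)\neq\varnothing$ as soon as $|\mathcal A|\geqslant 2$; in particular this already settles the case $|\mathcal A|=2$.

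To upgrade witness-existence to the full count $|E(\mathcal A)|\geqslant|\mathcal A|-1$, I would induct on $n$ (with a parallel decrease of $k$ and $t$), splitting each family by membership of the point $n$: the deletions $(\mathcal A_0,\mathcal B_0)$ are cross $t$-intersecting in $\binom{[n-1]}{k}$, the links $(\mathcal A_1,\mathcal B_1)$ are cross $(t-1)$-intersecting in $\binom{[n-1]}{k-1}$, and shiftedness supplies the mixed relations between $\mathcal A_0$ and $\mathcal B_1$ and between $\mathcal A_1$ and $\mathcal B_0$. \textbf{This is the step I expect to be the main obstacle}: naively summing the two diagonal inductive bounds overshoots $U+1$ by exactly one, so the crux is to reclaim that single unit of slack using the mixed cross-intersecting relations together with the nesting of $\mathcal A_1$ inside $\mathcal A_0$ forced by shiftedness—equivalently, to promote the witness lemma into an explicit injection $\mathcal A\setminus\{[k]\}\hookrightarrow E(\mathcal A)$. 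The base case $n=2k-t+1$, where the extremal configuration is most rigid, would be treated directly, and since the statement is a pure bound with no equality claim, no stability analysis is needed.
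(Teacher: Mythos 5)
First, a point of context: the paper itself does not prove this theorem; it is quoted from Wang--Zhang \cite{W13} (also Frankl--Kupavskii \cite{F17}), so your attempt must be judged on its own merits rather than against an in-paper argument. The front end of your proposal is sound: simultaneous shifting preserves cross $t$-intersection (the paper's Lemma \ref{G23}), a non-empty shifted family is indeed a downset in the dominance order and hence contains $[k]$, so both families lie in $\mathcal{G}=\{G:|G\cap[k]|\geqslant t\}$ with $|\mathcal{G}|=U$, and the theorem is equivalent (for shifted families) to the trade-off inequality $|E(\mathcal{A})|\geqslant|\mathcal{A}|-1$. Your witness lemma also checks out: the deficit forced when $[n]\setminus([k]\cup A)$ is exhausted is $2k-n+p-\max\{t,p\}\leqslant p-1$, the resulting intersection is $\max\{t-p,\,2k-n\}\leqslant t-1$ exactly because $n>2k-t$, and $|B\cap[k]|\geqslant\max\{p,t\}\geqslant t$ guarantees $B\in\mathcal{G}$.

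The genuine gap is the step you yourself flag, and it is larger than you estimate. Witness existence only yields $|E(\mathcal{A})|\geqslant 1$, since distinct $A$'s may share a witness; everything beyond $|\mathcal{A}|\leqslant 2$ is missing. Worse, your diagnosis that the naive induction ``overshoots by exactly one'' is quantitatively wrong: the link bound refers to a $(k-1)$-element kernel, whereas the sets through $n$ counted in $U(n,k,t)$ must still meet the $k$-set $[k]$ in $t$ points, so $U(n,k,t)\neq U(n-1,k,t)+U(n-1,k-1,t-1)$ and the overshoot is of order $U$ itself. Concretely, for $(n,k,t)=(6,3,2)$ one has $U(6,3,2)+1=11$, while the two diagonal inductive bounds sum to $U(5,3,2)+U(5,2,1)+2=7+7+2=16$; the mixed relations between $\mathcal{A}_0,\mathcal{B}_1$ and $\mathcal{A}_1,\mathcal{B}_0$ would thus have to absorb an error of order $U$, and you give no mechanism for this. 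The alternative route, an injection $\mathcal{A}\setminus\{[k]\}\hookrightarrow E(\mathcal{A})$, is also only a hope: note that it must be tight simultaneously in two structurally different regimes --- $\mathcal{A}=\{[k]\}$, $\mathcal{B}=\mathcal{G}$, and (for $t=1$, $n=2k$) $\mathcal{A}=\mathcal{B}$ a star, where $2\binom{2k-1}{k-1}=U+1$ and indeed $|E(\mathcal{A})|=|\mathcal{A}|-1$ with the witness being exact complementation --- so no local modification of your witness map will work; shiftedness must enter globally. With the base case $n=2k-t+1$ also only promised, what you have is a correct reduction plus a correct but weak lemma; the core counting inequality remains unproven, and the known proofs (e.g., via nontrivial independent sets of bipartite graphs in \cite{W13}) use substantially different machinery precisely because this step resists the local inductive approach.
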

 This bound was also independently obtained by Frankl and Kupavskii \cite{F17}.

 In 2022, 
 Shi, Frankl and Qian \cite{S22} generalized  Hilton--Milner's result to the multi-family setting, proving that for non-empty pairwise cross intersecting families $\mathcal{A}_1, \mathcal{A}_2, \ldots, \mathcal{A}_m \subseteq \binom{[n]}{k}$ with $n \geq  2k$ and $m \geq  2$, the total size satisfies $\sum_{i=1}^m |\mathcal{A}_i| \leq \max \big\{ \binom{n}{k} - \binom{n-k}{k} + m - 1, \, m \binom{n-1}{k-1} \big\}$. 
 This result stimulated more involved studies on this topic; see  \cite{HY-first,H25,Huang2025,ZF2024} for non-empty pairwise cross intersecting families in which the families have different uniformity.

 Solving an open problem of Shi, Frankl and Qian \cite[Problem 4.4]{S22}, 
 Li and Zhang  \cite{L25} recently proved the following result for  families with pairwise cross $t$-intersecting property.

\begin{theorem}[Li and Zhang \cite{L25}]\label{L25}
Let $ n, k, t$ and $ m $ be  integers with $ n > 2k - t,  k > t \geq  1$ and $ m \geq  2 $. If $ \mathcal{A}_1, \mathcal{A}_2, \ldots, \mathcal{A}_m \subseteq \binom{[n]}{k} $ are non-empty pairwise cross $t$-intersecting families, then

$$
\sum_{i=1}^m |\mathcal{A}_i| \leq \max \left\{ \binom{n}{k} - \sum_{i=0} ^{t-1}\binom{k}{i} \binom{n-k}{k-i} + m - 1, \, m M(n, k, t) \right\}. 
$$
\end{theorem}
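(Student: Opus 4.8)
My plan is to first record the two constructions that realize the two terms in the maximum, since these pin down the equality cases and indicate which regime each term governs. For the first term, fix $F=[k]$, set $\mathcal{A}_1=\{A\in\binom{[n]}{k}:|A\cap F|\geqslant t\}$ and $\mathcal{A}_2=\cdots=\mathcal{A}_m=\{F\}$; these are pairwise cross $t$-intersecting and their sizes sum to $\binom{n}{k}-\sum_{i=0}^{t-1}\binom{k}{i}\binom{n-k}{k-i}+m-1$. For the second term, take all $\mathcal{A}_i$ equal to an Ahlswede--Khachatrian optimal $t$-intersecting family $\mathcal{F}_r(n,k,t)$ of size $M(n,k,t)$; here the pairwise condition reduces to the $t$-intersecting property of $\mathcal{F}_r$, giving $mM(n,k,t)$. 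Next I would reduce to shifted families: applying the left-compressions $S_{ij}$ simultaneously to all $\mathcal{A}_i$ preserves the pairwise cross $t$-intersecting property and every cardinality $|\mathcal{A}_i|$, so I may assume all families are compressed.

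With shifting in hand, I would order $|\mathcal{A}_1|\geqslant|\mathcal{A}_2|\geqslant\cdots\geqslant|\mathcal{A}_m|\geqslant 1$ and split on the size of the largest family. In the first regime $|\mathcal{A}_1|\leqslant M(n,k,t)$, every family has size at most $M(n,k,t)$, so $\sum_{i=1}^m|\mathcal{A}_i|\leqslant mM(n,k,t)$ and we land on the second term. All the work is in the second regime $|\mathcal{A}_1|> M(n,k,t)$, where I must prove $\sum_{i=1}^m|\mathcal{A}_i|\leqslant\binom{n}{k}-\sum_{i=0}^{t-1}\binom{k}{i}\binom{n-k}{k-i}+m-1$. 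The starting observation is that, since $\mathcal{A}_2\neq\varnothing$ and the families are cross $t$-intersecting, fixing any $B\in\mathcal{A}_2$ forces $\mathcal{A}_1\subseteq\{A:|A\cap B|\geqslant t\}$, whence $|\mathcal{A}_1|\leqslant\binom{n}{k}-\sum_{i=0}^{t-1}\binom{k}{i}\binom{n-k}{k-i}$; writing $S$ for the resulting slack, the target inequality is equivalent to $\sum_{i=2}^m(|\mathcal{A}_i|-1)\leqslant S$.

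To bound this excess I would invoke the generating set method to control the structure of the large compressed family $\mathcal{A}_1$: because $|\mathcal{A}_1|>M(n,k,t)$, the family is not $t$-intersecting and, being compressed, must contain canonical spread-out sets whose cross $t$-intersecting duals are small and explicitly describable. This pins the other families inside a common restricted dual family, and I would then run the pushing--pulling method on $\mathcal{A}_2,\dots,\mathcal{A}_m$, redistributing sets between pairs so that the total $\sum_{i=1}^m|\mathcal{A}_i|$ does not decrease while the configuration is driven toward an extreme point; the extreme points are exactly the two constructions above. The two-family sharp bound of Wang and Zhang serves as the base mechanism for the pairwise trade-off at each redistribution step, and tracking equality through the compressions and through these two bounds yields the characterization of the extremal families.

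The main obstacle I anticipate is precisely this second regime: the small families $\mathcal{A}_2,\dots,\mathcal{A}_m$ may overlap heavily (indeed if two of them coincide they are forced to be $t$-intersecting), so one cannot simply add pairwise Wang--Zhang bounds without losing a factor of $m$, and the slack $S$ of $\mathcal{A}_1$ must be charged carefully against the aggregated excess $\sum_{i=2}^m(|\mathcal{A}_i|-1)$. A second point requiring care is that the hypothesis $n>2k-t$ only places us in the range where $M(n,k,t)=\max_{r}|\mathcal{F}_r(n,k,t)|$ is governed by the full Ahlswede--Khachatrian spectrum rather than the single term $\binom{n-t}{k-t}$; consequently the generating-set description of the dual of $\mathcal{A}_1$ and the pushing--pulling moves must be carried out uniformly in the parameter $r$, which is where I expect the bulk of the technical effort to lie.
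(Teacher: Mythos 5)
You should first note a bookkeeping point: the paper does not prove this theorem at all — it is quoted from Li and Zhang \cite{L25} — so the only internal comparison available is the paper's proof of its non-uniform analogue, Theorem \ref{ma1}, which runs on exactly the two methods you name (generating sets plus pushing--pulling). Your skeleton is sound as far as it goes: the two constructions are the right extremal candidates; simultaneous compression preserves the pairwise cross $t$-intersecting property (Lemma \ref{G23}) and all cardinalities; the regime $|\mathcal{A}_1|\leqslant M(n,k,t)$ is trivially absorbed by $mM(n,k,t)$; and in the complementary regime the dual bound $|\mathcal{A}_1|\leqslant\binom{n}{k}-\sum_{i=0}^{t-1}\binom{k}{i}\binom{n-k}{k-i}$ from a single $B\in\mathcal{A}_2$ is correct, as is the reformulation $\sum_{i=2}^{m}(|\mathcal{A}_i|-1)\leqslant S$.

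The genuine gap is that the entire second regime — where all the content of the theorem lives — is asserted rather than proved. The claims that the large compressed family ``must contain canonical spread-out sets whose cross $t$-intersecting duals are small and explicitly describable,'' and that pushing--pulling drives the configuration ``toward an extreme point; the extreme points are exactly the two constructions above,'' are restatements of the conclusion, not steps toward it. What is missing is the quantitative machinery that makes such a deformation argument run: (i) a replacement lemma for boundary generating sets that computes exactly how $\sum_i|\mathcal{A}_i|$ changes when generating sets of size $u$ containing the extent $\ell$ are deleted and truncations of sets of size $v$ are added (the uniform analogue of Lemma \ref{le1}); (ii) the structural fact that if boundary generating sets $A,B$ from two different families meet in exactly $t$ points then $A\cup B=[\ell]$ and $|A|+|B|=\ell+t$ (Lemma \ref{le2}) — this is what pairs the levels $u$ and $v=\ell+t-u$ and lets an averaging argument ($\|\langle\vec{\mathcal{G}}'\rangle\|+\|\langle\vec{\mathcal{G}}''\rangle\|=2\|\vec{\mathcal{A}}\|$) show one of two opposite replacements does not decrease the total; and (iii) the pushing--pulling step at the symmetric extent $s$, whose engine is the binomial inequality $\binom{s}{a}\binom{s}{s+t-a}\geqslant\binom{s}{s+t-a-1}\binom{s}{a-1}$ failing for $t>1$ (cf.\ Claims \ref{cl1} and \ref{cl2}), which is what actually forces progress and handles the parity case $2\mid(\ell+t)$. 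Without (i)--(iii) you have no monovariant with which to charge $\sum_{i\geqslant 2}(|\mathcal{A}_i|-1)$ against the slack $S$; indeed you correctly observe that summing Wang--Zhang two-family bounds loses a factor of $m$, but you offer no substitute mechanism. In short, the proposal names the right toolbox and the right case split, but the lemmas that constitute the proof — and that occupy essentially all of Li--Zhang's paper and of Section \ref{se2} here — are absent.
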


It remains an open problem \cite[Problem 4.5]{S22} to extend Theorem \ref{L25} to multi-families where the sets of families have different cardinalities, i.e., 
$\mathcal{A}_i \subseteq {[n] \choose k_i}$ for every $i\in [m]$, where $k_1\geq k_2\geq \cdots \geq k_m$.  
In this paper, we shall investigate Theorem \ref{L25} for  non-uniform families in another perspective. We shall prove an analogue for pairwise cross $t$-intersecting families $\mathcal{A}_1, \mathcal{A}_2,\ldots ,\mathcal{A}_m\subseteq 2^{[n]}$. This non-uniform extension in our paper may be of independent interest.

\subsection{Non-uniform $t$-intersecting families}

Now, we consider the maximum size of a non-uniform $t$-intersecting family. 
Let $M(n, t)$ denote the maximum of $|\mathcal{A}|$ over all $\mathcal{A}\subseteq 2^{[n]}$ having the $t$-intersecting property.
The following classical result of Katona \cite{K64} determines the exact value of 
 $M(n, t)$ for all $n, t$. 

\begin{theorem}[Katona \cite{K64}] \label{K64}
Let $n, t$ be integers with $n \geq  t\geq  1$.  Then
$$
M(n, t)= \begin{cases} 
\sum_{k\geq \frac{n+t}{2}} {n \choose k} & 
\text{if $n+t$ is even}; \\ 
\sum_{k\geq \frac{n+t+1}{2}} {n \choose k} + {n-1 \choose \frac{n+t-1}{2}} & \text {if $n+t$ is odd}.\end{cases}
$$ 
Moreover, for $t>1$, the first equality attains only for $ \mathcal{K}(n,t) =\left\{A \subseteq [n]:|A| \geq  \frac{n+t}{2}\right\}$; the second equality attains only for $\mathcal{K}'(n,t)=\left\{A \subseteq [n]:|A| \geq  \frac{n+t+1}{2}\right\}\cup {[n-1] \choose \frac{n+t-1}{2}}$.
\end{theorem}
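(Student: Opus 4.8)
The plan is to establish the two bounds separately. For the lower bound I would verify that the displayed families are $t$-intersecting and simply count them. If $n+t$ is even and $A,A'\in\mathcal{K}(n,t)$, then $|A\cap A'|\ge |A|+|A'|-n\ge 2\cdot\frac{n+t}{2}-n=t$. If $n+t$ is odd, the same inclusion--exclusion bound settles every pair drawn from $\{A:|A|\ge\frac{n+t+1}{2}\}$ as well as the mixed pairs, while for $B,B'\in\binom{[n-1]}{(n+t-1)/2}$ one uses $B\cup B'\subseteq[n-1]$ to get $|B\cap B'|\ge 2\cdot\frac{n+t-1}{2}-(n-1)=t$. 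Counting the sets then yields the two stated values of $M(n,t)$, so these families are feasible and it remains to prove the matching upper bound.

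For the upper bound I would first pass to the up-closure of $\mathcal{A}$: replacing $\mathcal{A}$ by $\{B:A\subseteq B\text{ for some }A\in\mathcal{A}\}$ preserves $t$-intersection, since $A\subseteq B$ forces $|B\cap C|\ge|A\cap C|$, and does not decrease $|\mathcal{A}|$; hence I may assume $\mathcal{A}$ is an up-set. Next I would fix a symmetric chain decomposition of $2^{[n]}$ chosen to be invariant under complementation $X\mapsto X^{c}$, so that its chains occur in complementary pairs $\{C,C^{\ast}\}$ with $C^{\ast}=\{X^{c}:X\in C\}$, each running from some level $a$ to level $n-a$. As $\mathcal{A}$ is an up-set, $\mathcal{A}\cap C=\{X\in C:|X|\ge\tau\}$ and $\mathcal{A}\cap C^{\ast}=\{Y\in C^{\ast}:|Y|\ge\tau^{\ast}\}$ are upper segments. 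The decisive local fact is that the two chains grow from complementary ends, so a size-$j$ set $X\in C$ and a size-$i$ set $Y\in C^{\ast}$ satisfy $|X\cap Y|=\max\{0,\,i+j-n\}$; applying this to the minimal members forces $\tau+\tau^{\ast}\ge n+t$ whenever both segments are nonempty. Summing the consequent estimate $|\mathcal{A}\cap(C\cup C^{\ast})|\le 2(n-a+1)-(\tau+\tau^{\ast})\le n-2a+2-t$ over all complementary pairs returns exactly $\sum_{k\ge(n+t)/2}\binom{n}{k}$ when $n+t$ is even, because $n-2a+2-t$ is precisely the contribution of $\mathcal{K}(n,t)$ to each pair.

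The main obstacle is that the inequality $\tau+\tau^{\ast}\ge n+t$ is available only when both $\mathcal{A}\cap C$ and $\mathcal{A}\cap C^{\ast}$ are nonempty. A pair on which $\mathcal{A}$ fills $C$ down to a very small set while $C^{\ast}$ is empty can locally exceed the extremal count by up to $t-1$; such a configuration occurs exactly when $\mathcal{A}$ contains a set of size below the threshold, which by $t$-intersection severely restricts $\mathcal{A}$ globally. I expect this to be the hard part, and I would resolve it either by the dichotomy ``the minimum member of $\mathcal{A}$ has size at least $\lceil(n+t)/2\rceil$, whence $\mathcal{A}\subseteq\binom{[n]}{\ge\lceil(n+t)/2\rceil}$ and the bound is immediate, or else $\mathcal{A}$ contains a small set and one induces on $n$'', or by a prior shifting reduction: the standard shifts $S_{ij}$ preserve both $|\mathcal{A}|$ and the $t$-intersecting property and push $\mathcal{A}$ toward the canonical ball, after which no complementary pair is simultaneously full and empty and the summation is legitimate. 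The parity correction in the odd case enters here as well: when $n+t$ is odd the per-pair optimum is the asymmetric split $\{\tau,\tau^{\ast}\}=\{\frac{n+t-1}{2},\frac{n+t+1}{2}\}$, and making a globally consistent choice of which chain receives the lower threshold is what singles out the middle-level sets avoiding the element $n$ and produces the extra term $\binom{n-1}{(n+t-1)/2}$.

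Finally, for $t>1$ I would obtain the characterization of the extremal families by tracking equality: optimality forces $\tau+\tau^{\ast}=n+t$ on every complementary pair, hence every threshold equals $\frac{n+t}{2}$ in the even case and realizes the asymmetric optimum consistently in the odd case, and undoing the shifts and the up-closure shows that the only families meeting the bound are $\mathcal{K}(n,t)$ and $\mathcal{K}'(n,t)$, respectively.
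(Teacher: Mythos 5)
The first thing to note is that the paper does not prove this statement at all: Theorem~\ref{K64} is quoted as Katona's classical result, with the remark that the original proof uses Katona's intersection--shadow theorem and that alternative proofs (e.g.\ Ahlswede--Khachatrian's pushing--pulling) exist in the literature. So your proposal must be judged on its own, and it has a genuine gap. Your lower-bound half (feasibility and counting of $\mathcal{K}(n,t)$ and $\mathcal{K}'(n,t)$) is correct, and so is the reduction to up-sets. But the upper bound is built on an object that does not exist: for $n\geqslant 2$ there is \emph{no} symmetric chain decomposition of $2^{[n]}$ that is invariant under complementation. Indeed, in any SCD the number of chains from level $a$ to level $n-a$ is $\binom{n}{a}-\binom{n}{a-1}$, so there is exactly one chain running from level $0$ to level $n$; invariance would force that chain to equal its own complement chain, so its $1$-element member $\{x\}$ would have to be the complement of its $(n-1)$-element member $[n]\setminus\{y\}$, i.e.\ $x=y$, contradicting $\{x\}\subseteq[n]\setminus\{y\}$. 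This particular defect is repairable: fix any SCD, pair each chain $C$ with its complement chain $C^{*}$ (which need not belong to the SCD), and use the identity $\sum_{C}|\mathcal{A}\cap C^{*}|=|\mathcal{A}|$, so that $2|\mathcal{A}|=\sum_{C}\bigl(|\mathcal{A}\cap C|+|\mathcal{A}\cap C^{*}|\bigr)$; your local computation $|X\cap Y|=\max\{0,|X|+|Y|-n\}$ for $X\in C$, $Y\in C^{*}$ survives this change.

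The decisive gap, however, is the one you flag yourself --- pairs with $\mathcal{A}\cap C\neq\emptyset$ but $\mathcal{A}\cap C^{*}=\emptyset$ --- and neither of your proposed fixes closes it. The shifting fix is demonstrably false: the star $\mathcal{A}=\{A\subseteq[n]:[t]\subseteq A\}$ is monotone, shifted and $t$-intersecting, yet for $n\geqslant 2t$ the Greene--Kleitman bracketing SCD contains a chain $C$ whose minimum element is $[t]$; then every member of $C$ contains $[t]$, so $\mathcal{A}\cap C=C$ while $\mathcal{A}\cap C^{*}=\emptyset$, and this pair exceeds the contribution of $\mathcal{K}(n,t)$ by exactly $t-1>0$. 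So ``after shifting no complementary pair is simultaneously full and empty'' fails, and more fundamentally, for $t>1$ no purely local, pair-by-pair estimate can prove the theorem: some pairs genuinely overshoot (note also the short chains with $a>(n-t)/2$, which $\mathcal{K}(n,t)$ misses entirely but $\mathcal{A}$ may meet), and the overshoot must be compensated globally. That global compensation is precisely the content supplied in the known proofs by the intersection--shadow theorem or by pushing--pulling, and it is exactly what your proposal leaves at the level of intention: the ``dichotomy plus induction on $n$'' alternative gives no inductive mechanism, the odd case's ``globally consistent choice'' of asymmetric thresholds is not an argument, and the uniqueness claim for $t>1$ cannot be obtained by ``undoing the shifts'' routinely, since shifting can carry non-extremal families onto extremal ones. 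In short: the framework can be set up after repair, but the step that constitutes the actual difficulty of Katona's theorem is missing, and the specific mechanisms you offer for it fail.
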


The odd case of Katona's theorem can be deduced from the even case \cite{AK1999}. Moreover, the odd case of Katona's theorem implies the celebrated Erd\H{o}s--Ko--Rado theorem for uniform intersecting families. 
The original proof of Theorem \ref{K64} is based on using the so-called Katona intersection shadow theorem. 
Alternative proofs of Theorem \ref{K64} can be found in \cite{AK1999,AK2005,Wang1977} and \cite[page 98]{Bol1986}. We refer to \cite{Fra2017jctb,Fra2017cpc,HKP2020,LW2024,WFL2025} for  recent related extensions.

In 2021, 
 Frankl and Wong \cite{FW21} established an extension of Katona's theorem.

\begin{theorem}[Frankl  and Wong \cite{FW21}] \label{FW21}
Let $n, t$ be integers with $n \geq  t\geq  1$.  Let $\mathcal{A},\mathcal{B} \subseteq 2^{[n]}$ be  non-empty cross $t$-intersectng families. Then
$$
|\mathcal{A}|+|\mathcal{B}| \leq \sum_{i=t}^n\binom{n}{i}+1.
$$
Moreover, for $t>1$, the equality holds if and only if $\{\mathcal{A}, \mathcal{B}\}=\{[n],\{G \subseteq [n]: |G|\geq  t\}\}$.
\end{theorem}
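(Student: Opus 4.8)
The plan is to read the bound off a complementation pairing, which disposes of $t=1$ instantly and pinpoints the exact source of slack, and then to push through $t\geqslant 2$ by induction on $n$, the sole genuine difficulty being a single unit of saving. As a normalization I would first replace each family by its up-closure $\mathcal{A}^{\uparrow}=\{A'\subseteq[n]:A'\supseteq A\text{ for some }A\in\mathcal{A}\}$, and likewise for $\mathcal{B}$. This preserves the cross $t$-intersecting property, since $|A'\cap B|\geqslant|A\cap B|\geqslant t$, and can only increase $|\mathcal{A}|+|\mathcal{B}|$; so for the bound we may assume both families are up-sets, and for the characterization we may observe afterwards that an extremal pair gains nothing from the closure and hence was already an up-set. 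Note that every member of $\mathcal{A}$ or $\mathcal{B}$ has size $\geqslant t$, so both families lie inside the collection of the $N:=\sum_{i=t}^{n}\binom{n}{i}$ sets of size at least $t$.

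The core step is a pairing. Split $2^{[n]}$ into the $2^{n-1}$ complementary pairs $\{S,\overline{S}\}$ with $\overline{S}=[n]\setminus S$, and for such a pair $P$ put
\[
w(P)=[S\in\mathcal{A}]+[S\in\mathcal{B}]+[\overline{S}\in\mathcal{A}]+[\overline{S}\in\mathcal{B}],
\]
so that $|\mathcal{A}|+|\mathcal{B}|=\sum_P w(P)$. Since $S\cap\overline{S}=\emptyset$ and $t\geqslant1$, the cross $t$-intersecting condition forbids $\{S\in\mathcal{A},\overline{S}\in\mathcal{B}\}$ and $\{\overline{S}\in\mathcal{A},S\in\mathcal{B}\}$, whence $([S\in\mathcal{A}]+[\overline{S}\in\mathcal{B}])+([\overline{S}\in\mathcal{A}]+[S\in\mathcal{B}])\leqslant2$, i.e. $w(P)\leqslant2$ for every $P$. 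This yields $|\mathcal{A}|+|\mathcal{B}|\leqslant2^{n}$, which already proves the theorem for $t=1$, as then $N+1=2^{n}$. For $t\geqslant2$ the target $N+1$ is equivalent to showing that the total slack $\sum_P(2-w(P))$ is at least $\sum_{i=0}^{t-1}\binom{n}{i}-1$. A pair whose smaller side has size $<t$ contributes slack exactly when its larger side (of size $\geqslant n-t+1$) misses one or both families, and in the conjectured extremal pair the slack on these pairs totals precisely $\sum_{i=0}^{t-1}\binom{n}{i}-1$, concentrated entirely there save for the single pair $\{[n],\emptyset\}$ with $[n]\in\mathcal{A}\cap\mathcal{B}$. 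The difficulty is that in general the required slack need not live only on such pairs, so the pairing alone does not close the gap.

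To finish for $t\geqslant2$ I would induct on $n$, with the cases $t=1$ (the pairing) and $t=n$ (forced, since then $\mathcal{A}=\mathcal{B}=\{[n]\}$) as base, and the step treating $2\leqslant t\leqslant n-1$. Decomposing each family at the element $n$ into $\mathcal{F}_0=\{F\in\mathcal{F}:n\notin F\}$ and $\mathcal{F}_1=\{F\setminus\{n\}:n\in F\in\mathcal{F}\}$ (if a block is empty the problem reduces to a smaller ground set), one reads off that $(\mathcal{A}_0,\mathcal{B}_0)$ is cross $t$-intersecting on $[n-1]$ while $(\mathcal{A}_1,\mathcal{B}_1)$ is cross $(t-1)$-intersecting on $[n-1]$. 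Applying the inductive hypothesis to both blocks and using
\[
\textstyle\sum_{i=t}^{n}\binom{n}{i}=\sum_{i=t}^{n-1}\binom{n-1}{i}+\sum_{i=t-1}^{n-1}\binom{n-1}{i},
\]
that is $N(n,t)=N(n-1,t)+N(n-1,t-1)$, gives $|\mathcal{A}|+|\mathcal{B}|\leqslant N(n,t)+2$. \emph{The main obstacle is to save the last unit}, turning $+2$ into $+1$. For this I would use the two remaining cross relations between the blocks, namely that $(\mathcal{A}_0,\mathcal{B}_1)$ and $(\mathcal{A}_1,\mathcal{B}_0)$ are each cross $t$-intersecting: if both blocks were simultaneously extremal, then by the uniqueness carried along in the induction one block would be forced to the singleton $\{[n-1]\}$ while the other would contain a set of size only $t-1$, contradicting one of these cross $t$-relations; hence the two blocks cannot both attain their maxima, and by integrality the sum drops by at least one. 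The degenerate inner case $t=1$, where the $(t-1)$-block carries no uniqueness, must be handled by a direct argument, and this bookkeeping is the technical heart of the proof.

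For the characterization when $t>1$ I would trace the equality case through the induction. Equality forces the two blocks to be as large as the cross $t$-relations permit while not being simultaneously extremal; feeding the inductive uniqueness into those cross relations propagates the rigid structure upward and, together with the up-set normalization, pins the configuration down to $\mathcal{A}=\{[n]\}$ and $\mathcal{B}=\{G\subseteq[n]:|G|\geqslant t\}$, the roles of $\mathcal{A}$ and $\mathcal{B}$ being interchangeable and fixed only by which family is the singleton. That the characterization genuinely requires $t>1$ is already visible in the pairing: for $t=1$ the slack is spread across the whole cube with no distinguished pair, so many configurations are extremal.
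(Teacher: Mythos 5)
Your route is necessarily different from the paper's: the paper does not prove Theorem \ref{FW21} at all (it is quoted from Frankl--Wong), and its own machinery --- shifting, generating sets and pushing--pulling for the $m$-family Theorem \ref{ma1} --- recovers it only as the special case $m=2$. Measured on its own terms, your proposal proves the bound but not the characterization. The pairing argument for $t=1$ is correct, and the inductive skeleton for $t\geqslant 2$ can be made to work: writing $N(n,t):=\sum_{i=t}^{n}\binom{n}{i}$, the split at $n$ plus Pascal gives $N(n,t)+2$, and ``not both blocks extremal'' does follow for every $t\geqslant 2$ from the uniqueness of the $0$-block alone (if $\mathcal{A}_0=\{[n-1]\}$ and $\mathcal{B}_0=\{G:|G|\geqslant t\}$, the cross relation $(\mathcal{A}_1,\mathcal{B}_0)$ forces $\mathcal{A}_1=\{[n-1]\}$ and $(\mathcal{A}_0,\mathcal{B}_1)$ forces $\mathcal{B}_1\subseteq\{G:|G|\geqslant t\}$, so the $1$-block sum is at most $N(n-1,t)+1=N(n-1,t-1)+1-\binom{n-1}{t-1}$, strictly below its maximum). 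However, your parenthetical ``if a block is empty the problem reduces to a smaller ground set'' is wrong as stated when $\mathcal{A}_0=\emptyset\neq\mathcal{B}_0$: the four families do not live on a common smaller ground set, and one needs a hybrid estimate instead ($|\mathcal{B}_0|\leqslant N(n-1,t)$ trivially, plus induction on the cross $(t-1)$-intersecting pair $(\mathcal{A}_1,\mathcal{B}_1)$). This is not a side issue, because the unique extremal pair $\mathcal{A}=\{[n]\}$, $\mathcal{B}=\{G:|G|\geqslant t\}$ has $\mathcal{A}_0=\emptyset$: equality occurs \emph{only} in the case you dismissed. Consequently your characterization plan --- tracing inductive uniqueness through two non-empty blocks --- is aimed at configurations where equality never holds, while the rigidity argument that actually matters must be run inside the empty-block case.

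The second genuine gap is the degenerate case $t=2$, which is a missing idea rather than ``bookkeeping.'' With all four blocks non-empty, equality forces either the $0$-block extremal (ruled out above) or the $1$-block extremal and the $0$-block \emph{exactly one below} its maximum. For $t\geqslant 3$ your mechanism works, since uniqueness puts a $(t-1)$-set into the $1$-block and contradicts a cross $t$-relation. For $t=2$ the $1$-block is cross $1$-intersecting, so induction gives no uniqueness there; worse, the $0$-block is one below its maximum, and your induction carries no stability clause, so inductive uniqueness says nothing about it either. Closing this case needs new input: for instance, tightness of the pairing plus the fact that $\emptyset$ and singletons cannot lie in $\mathcal{A}_1,\mathcal{B}_1$ forces all co-singletons of $[n-1]$ into both $1$-block families, whence every member of $\mathcal{A}_0\cup\mathcal{B}_0$ has size at least $3$, and one must then show that cross $2$-intersecting pairs with minimum set size $3$ sum to strictly less than $N(n-1,2)$ --- a constrained Katona-type estimate of essentially the same nature as the theorem itself. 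Either such a lemma, or an induction hypothesis strengthened to a stability statement, is required; as written, the uniqueness part of your proof does not close.
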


This result was later expanded by Borg and Feghli \cite{B22}, who investigated cross intersecting families in a more general non-uniform setting, specifically for $\mathcal{A} \subseteq \binom{[n]}{\leq k}$ and $\mathcal{B} \subseteq \binom{[n]}{\leq \ell}$. Liu \cite{L23} further advanced this line of research by considering the cross $t$-intersecting case, providing a broader framework for such extremal problems. For further developments and related results, we refer to \cite{F17, FT18, G23, H25, JXXZ2024}, as well as the comprehensive survey \cite{FT16}.

Our result presents a unified generalization of both Theorem \ref{K64} and Theorem \ref{FW21}, and it could also be viewed as an extension of Theorem \ref{L25} to the non-uniform families.

\begin{theorem}[Main result] \label{ma1}
Let $ n\geq  t\geq  1$ and $ m\geq  2$ be  integers. If $ \mathcal{A}_1, \mathcal{A}_2, \ldots, \mathcal{A}_m \subseteq 2^{[n]}$ are non-empty pairwise cross $t$-intersecting families, then

$$
\sum_{i=1}^m |\mathcal{A}_i| \leq \max \left\{ \sum_{k=t} ^{n}\binom{n}{k}  + m - 1, \, m M(n, t) \right\}.
$$
For $t>1$, the equality holds if and only if, up to isomorphism, one of the following holds:
\begin{enumerate}
    \item[\rm (a)]  $\mathcal{A}_1=\left\{A \subseteq [n]:|A \cap[n]| \geq  t\right\}$ and $\mathcal{A}_2=\cdots= \mathcal{A}_m=\{[n]\}$ if $\sum_{k=t} ^{n}\binom{n}{k}  + m - 1\geq  m M(n, t)$.

\item[\rm (b)]  $\mathcal{A}_1=\cdots= \mathcal{A}_m= \mathcal{K}(n,t)$ for $2\mid (n+t)$  and $\mathcal{A}_1=\cdots= \mathcal{A}_m= \mathcal{K}'(n,t)$ for $2\nmid (n+t)$ if $\sum_{k=t} ^{n}\binom{n}{k}  + m - 1\leq m M(n, t)$.
\end{enumerate}
\end{theorem}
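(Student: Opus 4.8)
The plan is to reduce to upward-closed families, pass to their generating sets, and then run an induction on $m$ whose single hard case is handled by the pushing--pulling method.

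\emph{Reductions.} First I would observe that we may assume every $\mathcal{A}_i$ is an up-set (upward closed): replacing $\mathcal{A}_i$ by its up-closure $\mathcal{A}_i^{\uparrow}$ cannot destroy the pairwise cross $t$-intersecting property, since enlarging a set only enlarges its intersections, and it does not decrease $\sum_i|\mathcal{A}_i|$. The same monotonicity shows that any \emph{extremal} configuration is automatically upward closed, which is exactly what the equality analysis will need. I would also record that, because $m\geqslant 2$ and every family is non-empty, each set in each family has size $\geqslant t$ (a set of size $<t$ meets nothing in $t$ elements). Passing to minimal generators, each up-set $\mathcal{A}_i$ corresponds to an antichain $\mathcal{G}_i$, and two up-sets are cross $t$-intersecting iff their generating antichains are; equivalently, after complementation one works with down-sets $\mathcal{D}_i$ whose pairwise unions have size $\leqslant n-t$. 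These two faces of the generating set method let me reason only about minimal (resp.\ maximal) sets.

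\emph{Induction on $m$, the easy branch.} I would induct on $m$. Sorting the families, let $\mathcal{A}_m$ be smallest. If $|\mathcal{A}_m|=1$ then, being an up-set, $\mathcal{A}_m=\{[n]\}$, and the cross condition with $\{[n]\}$ only reasserts that the other sets have size $\geqslant t$. Thus $\mathcal{A}_1,\dots,\mathcal{A}_{m-1}$ are pairwise cross $t$-intersecting, and the induction hypothesis (or, for $m=2$, the trivial bound $|\mathcal{A}_1|\leqslant\sum_{k=t}^{n}\binom{n}{k}$) gives $\sum_{i<m}|\mathcal{A}_i|\leqslant\max\{\sum_{k=t}^{n}\binom{n}{k}+m-2,\ (m-1)M(n,t)\}$; adding $|\mathcal{A}_m|=1$ and using $M(n,t)\geqslant 1$ yields the claimed bound. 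This is the configuration-(a) branch. Hence everything reduces to the case where no family equals $\{[n]\}$, i.e.\ $|\mathcal{A}_i|\geqslant 2$ for all $i$, where I must prove the sharper bound $\sum_i|\mathcal{A}_i|\leqslant mM(n,t)$.

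\emph{The hard branch.} This is where I expect the real work and where pushing--pulling enters. A useful first move is the symmetrization replacing a pair $(\mathcal{A}_i,\mathcal{A}_j)$ by $(\mathcal{A}_i\cup\mathcal{A}_j,\ \mathcal{A}_i\cap\mathcal{A}_j)$: one checks it preserves every pairwise cross $t$-intersecting relation and the total $\sum_i|\mathcal{A}_i|$, and (since $\sum_i|\mathcal{A}_i|^2$ strictly increases until all pairs are comparable) iterating it turns the families into a chain $\mathcal{B}_1\subseteq\cdots\subseteq\mathcal{B}_m$. Each $\mathcal{B}_i$ with $i<m$ is then $t$-intersecting, because for $X,Y\in\mathcal{B}_i$ we have $X\in\mathcal{B}_i$ and $Y\in\mathcal{B}_i\subseteq\mathcal{B}_{i+1}$, so the cross condition between $\mathcal{B}_i$ and $\mathcal{B}_{i+1}$ forces $|X\cap Y|\geqslant t$; hence $|\mathcal{B}_i|\leqslant M(n,t)$ by Theorem \ref{K64}. \emph{The main obstacle} is to control the top family $\mathcal{B}_m$, which need \emph{not} be $t$-intersecting (two of its ``new'' sets may come from a single original family). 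Here I would exploit the cross $t$-intersecting tradeoff between $\mathcal{B}_m$ and $\mathcal{B}_{m-1}$ and then apply the pushing--pulling operations (element-wise compressions, inducting on $n$ via the link/deletion $\mathcal{B}(n),\mathcal{B}(\bar n)$) to transport the whole chain toward the balanced canonical family without decreasing the total, forcing $\sum_i|\mathcal{A}_i|\leqslant mM(n,t)$. The delicate point throughout is that each compression must be shown to preserve all $\binom{m}{2}$ cross constraints simultaneously while keeping the sum monotone.

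\emph{Assembling the bound and equality.} Combining the two branches gives $\sum_i|\mathcal{A}_i|\leqslant\max\{\sum_{k=t}^{n}\binom{n}{k}+m-1,\ mM(n,t)\}$; in particular the case $m=2$ is consistent with Theorem \ref{FW21} once one checks the elementary inequality $2M(n,t)\leqslant\sum_{k=t}^{n}\binom{n}{k}+1$. For the characterization I would assume $t>1$ and trace equality through each step: in the (a)-branch, tightness of the reduction together with the uniqueness in Theorem \ref{FW21} forces $\mathcal{A}_1=\{A\subseteq[n]:|A|\geqslant t\}$ and $\mathcal{A}_2=\cdots=\mathcal{A}_m=\{[n]\}$; in the (b)-branch, tightness of $|\mathcal{B}_i|\leqslant M(n,t)$ together with the uniqueness in Theorem \ref{K64} forces every family to equal $\mathcal{K}(n,t)$ for $2\mid(n+t)$ and $\mathcal{K}'(n,t)$ for $2\nmid(n+t)$. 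The hypothesis $t>1$ is essential precisely because the extremal families of Katona and of Frankl--Wong are non-unique when $t=1$.
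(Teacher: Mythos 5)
Your scaffolding (monotonization, generating antichains, symmetrization to a chain) is fine as far as it goes, but the central reduction on which the whole plan rests is false. You claim that once no family equals $\{[n]\}$ (equivalently, every up-set has at least two members), the sharper bound $\sum_i|\mathcal{A}_i|\leqslant mM(n,t)$ must hold, and you make this the target of your hard branch. Counterexample: take $n=7$, $t=3$, $m=2$, and set $\mathcal{A}_1=\{A\subseteq[7]:|A\cap[5]|\geqslant 3\}$ and $\mathcal{A}_2=\{A\subseteq[7]:[5]\subseteq A\}$. These are non-empty, monotone and cross $3$-intersecting (for $A\in\mathcal{A}_1$, $S\in\mathcal{A}_2$ one has $|A\cap S|\geqslant|A\cap[5]|\geqslant 3$), and neither is $\{[7]\}$; yet
$|\mathcal{A}_1|+|\mathcal{A}_2|=2^{2}\sum_{k=3}^{5}\binom{5}{k}+2^{2}=64+4=68$, while $2M(7,3)=2\cdot 29=58$. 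So your dichotomy ("a singleton family is present" versus "the total is at most $mM(n,t)$") does not exhaust the possibilities. There is a one-parameter family of configurations, $\mathcal{A}_1=\{A:|A\cap[\ell]|\geqslant t\}$ and $\mathcal{A}_2=\cdots=\mathcal{A}_m=\{A:[\ell]\subseteq A\}$ for $t\leqslant\ell\leqslant n$, interpolating between your two branches, and for intermediate $\ell$ its total can exceed $mM(n,t)$ while containing no singleton. Any correct proof must confront exactly these configurations; the paper does so by showing (its Lemma \ref{le4}, a convexity-type statement) that their total $f(\ell)=\left(\sum_{k=t}^{\ell}\binom{\ell}{k}+m-1\right)2^{n-\ell}$ satisfies $f(\ell)\leqslant\max\{f(t),f(n)\}$, after a generating-set analysis (Lemma \ref{le22}) shows that the only alternative to this situation is that all boundary generating sets have the single cardinality $(\ell+t)/2$.

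A second, related problem is that the chain symmetrization, though valid (monotonization puts $[n]$ in every family, so intersections stay non-empty, and your sum-of-squares argument terminates correctly), discards precisely the information needed afterwards: in the genuine extremal configuration (a) the chain is $\{[n]\}\subseteq\cdots\subseteq\{[n]\}\subseteq\{A:|A|\geqslant t\}$, whose top family has size $\sum_{k=t}^{n}\binom{n}{k}\gg M(n,t)$. Hence "bound the bottom $m-1$ families by $M(n,t)$ and control the top" can at best yield $(m-2)M(n,t)+\sum_{k=t}^{n}\binom{n}{k}+1$ (Katona for the bottom, Frankl--Wong for the top pair), and this quantity is always at least, and for $m\geqslant 3$ generally strictly greater than, the maximum you need to prove. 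The pushing--pulling step you invoke to close this gap is left as a black box, and it is not used this way in the paper: there, after comparing the extent $\ell$ with the symmetric extent $s$ of a sum-maximal shifted monotone configuration, the case $s\geqslant\ell$ pins every family to Katona's $\mathcal{K}(n,t)$ or $\mathcal{K}'(n,t)$, while the case $s<\ell$ is killed by Lemma \ref{le5} (pushing--pulling) together with a lifting of the configuration to $2^{[n+1]}$, producing a contradiction with maximality. Without an argument of this kind, both your inequality and your equality characterization remain unproved.
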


We point out here that Theorem \ref{ma1} implies 
Theorem \ref{K64} in a surprising way. Indeed, taking $m$ as sufficiently large and setting $\mathcal{A}_1= \cdots = \mathcal{A}_m$, we can see that $m M(n,t) > 
\sum_{k=t}^n {n \choose k} + m-1$. Thus, we get $|\mathcal{A}_1| \leq M(n,t)$ for every $t$-intersecting family $\mathcal{A}_1\subseteq 2^{[n]}$. 
Moreover, in the case $t=1$, there are many optimal families. For example, for the case $2\mid (n+1)$, the families $\mathcal{A}_1=\cdots= \mathcal{A}_m= \mathcal{K}(n,1)$  and $\mathcal{A}_1=\cdots= \mathcal{A}_m= \left\{A \subseteq [n]:1\in A\right\}$ all have $\sum_{i=1}^m |\mathcal{A}_i|=m2^{n-1}$.

  \medskip 
\noindent 
{\bf Organization.} 
 The remainder of this paper is organized as follows: In Section \ref{se2}, we introduce the key ingredient in our proof, which combines the generating set method \cite{AK1997} with the pushing-pulling method \cite{AK1999}. In Section \ref{se3}, we present the complete proof of Theorem \ref{ma1}.

\section{Preliminaries} \label{se2} 
Throughout this paper, we adopt the following notation.
Let $\mathcal{A} \subseteq 2^{[n]}$ be a family. 
We denote by $\mathcal{A}^{(k)}=\{A\in\mathcal{A}: |A|=k\}$ the $k$-uniform subfamily of $\mathcal{A}$. 
For any $x\in [n]$, we define 
\begin{align*}
 \mathcal{A} [x]=\{A\in \mathcal{A}: x\in A\} 
 \quad \text{and} \quad  \mathcal{A} (x)=\{A\backslash\{x\}: x\in A\in \mathcal{A}\}.
\end{align*}
With the above notation, we  have
$ 
\mathcal{A}[x]^{(k)}=\{A\in\mathcal{A}: x\in A, |A|=k\}
$ 
and 
$ 
\mathcal{A}[x]^{(k)}(x)=\{A\backslash\{x\}: x\in A\in \mathcal{A}, |A|=k\}. 
$ Moreover, we denote $\mathcal{A}(\bar{x})=\{A\in\mathcal{A}: x\notin A\}$. 

\subsection{The shifting operator}

We begin with the shifting technique \cite{E61}, which is one of the most powerful tools in extremal set theory.
Let  $\mathcal{A} \subseteq 2^{[n]}$ be a family and let $1 \leq i<j \leq n$ be integers. The \textit{shifting operator} $s_{i, j}$ on $\mathcal{A}$ is defined as follows:
$$s_{i, j}(\mathcal{A})=\left\{s_{i, j}(A): A \in \mathcal{A}\right\},$$
where
$$
s_{i, j}(A)= \begin{cases}
(A \backslash\{j\}) \cup\{i\} & \text { if } j \in A, i \notin A \text { and } (A \backslash\{j\}) \cup\{i\} \notin \mathcal{A}; \\ A & \text { otherwise. }\end{cases}
$$
A family $\mathcal{A} \subseteq 2^{[n]}$ is called \textit{shifted} or \textit{left-compressed} if $s_{i, j}(\mathcal{A})=  \mathcal{A}$ holds for all $1 \leq i<j \leq n$.  
It is well-known that by applying the shifting operator repeatedly, every family will terminate at a shifted family (although the terminated family is not unique). A critical observation in the proof of the Erd\H{o}s--Ko--Rado theorem is that the shifting operation preserves the property of a family being intersecting; see the surveys  \cite{Frankl1987, Wang2025}. In fact, the shifting operator has the following important property for corss $t$-intersecting families; see, e.g., \cite{G23}.

\begin{lemma} \label{G23}
 Let $\mathcal{A},\mathcal{B}  \subseteq 2^{[n]}$  be  cross $t$-intersecting families and let $1 \leq i<j \leq n$ be integers.
Then $s_{i, j}(\mathcal{A})$ and $s_{i, j}(\mathcal{B})$ are cross $t$-intersecting.
\end{lemma}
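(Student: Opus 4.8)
The plan is to fix arbitrary members $A' \in s_{i,j}(\mathcal{A})$ and $B' \in s_{i,j}(\mathcal{B})$, write $A' = s_{i,j}(A)$ and $B' = s_{i,j}(B)$ for some $A \in \mathcal{A}$ and $B \in \mathcal{B}$, and verify $|A' \cap B'| \geqslant t$ by a case analysis organized around whether each of $A, B$ is actually moved by the operator. Recall that $A$ is moved precisely when $j \in A$, $i \notin A$, and $(A \setminus \{j\}) \cup \{i\} \notin \mathcal{A}$; otherwise $s_{i,j}(A) = A$, and similarly for $B$ with respect to $\mathcal{B}$. Since the cross $t$-intersecting hypothesis and the operator $s_{i,j}$ treat $\mathcal{A}$ and $\mathcal{B}$ symmetrically, I may always exchange the roles of the two families when convenient.

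First I would dispose of the routine configurations. If neither set is moved, then $|A' \cap B'| = |A \cap B| \geqslant t$ is immediate. If both are moved, then $j \in A \cap B$ while $i \notin A \cup B$, so $A' \cap B' = ((A \cap B) \setminus \{j\}) \cup \{i\}$, and a one-line count (one element $j$ lost, one element $i$ gained) gives $|A' \cap B'| = |A \cap B| \geqslant t$. The remaining case is when exactly one set is moved; using the symmetry I may assume $A$ is moved and $B' = B$, so that $j \in A$ and $i \notin A$. Here I would split on how $i$ and $j$ meet $B$. If $i \in B$, then $A' \cap B \supseteq ((A \cap B) \setminus \{j\}) \cup \{i\}$ and a short count yields $|A' \cap B| \geqslant |A \cap B| \geqslant t$. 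So I may further assume $i \notin B$, whence $A' \cap B = (A \setminus \{j\}) \cap B = (A \cap B) \setminus \{j\}$; if in addition $j \notin B$ this equals $A \cap B$, and again $|A' \cap B| \geqslant t$.

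The only genuinely delicate configuration, and the step I expect to be the main obstacle, is when $A$ is moved, $B$ is fixed, and simultaneously $j \in B$ and $i \notin B$. In this subcase $A' \cap B = (A \cap B) \setminus \{j\}$ has exactly one fewer element than $A \cap B$, so the naive estimate only gives $|A' \cap B| \geqslant t - 1$, which is not enough. The resolution is to extract information from the fact that $B$ was \emph{not} moved: since $j \in B$ and $i \notin B$, the definition of $s_{i,j}$ forces the failure of its third condition, i.e. the witness $B'' := (B \setminus \{j\}) \cup \{i\}$ already belongs to $\mathcal{B}$. I would then compare the genuine pair $A \in \mathcal{A}$ and $B'' \in \mathcal{B}$: because $i \notin A$ and $j \in A$, one checks $A \cap B'' = A \cap ((B \setminus \{j\}) \cup \{i\}) = (A \cap B) \setminus \{j\} = A' \cap B$, so the two intersections coincide as sets. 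Applying the cross $t$-intersecting hypothesis to $A$ and $B''$ then gives $|A' \cap B| = |A \cap B''| \geqslant t$, which closes this subcase and hence the lemma. The argument is elementary once this ``replace the unshifted set by its shifted image'' trick is in place; the crux is recognizing that whenever the unmoved set $B$ blocks a shift, it must already contain the set $B''$ that bears the lost intersection.
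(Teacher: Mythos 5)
Your proof is correct. The paper does not actually prove this lemma --- it only cites the literature (``see, e.g., \cite{G23}'') --- but your argument is the standard one: the only nontrivial configuration is indeed when one set (say $A$) is shifted while the other set $B$ is blocked with $j \in B$, $i \notin B$, and you resolve it exactly as the standard proof does, by observing that the blocking forces $(B \setminus \{j\}) \cup \{i\} \in \mathcal{B}$ and applying the cross $t$-intersecting hypothesis to $A$ and this witness. All cases are covered and each counting step checks out, so there is nothing to fix.
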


A family $\mathcal{A}\subseteq 2^{[n]}$ is called an \textit{antichain} if there are no distinct sets $A, A'\in\mathcal{A}$ satisfying $A\subseteq A'$.
A family $\mathcal{A}\subseteq 2^{[n]}$ is called \textit{monotone} if $A\in\mathcal{A}$ and $A\subseteq B$ imply $B\in\mathcal{A}$. 
The \textit{up-set} of a family $\mathcal{A}$ is defined by
$
\langle\mathcal{A}\rangle=\left\{F\subseteq [n]: \text{ there exists }A \in\mathcal{A}\text{ such that }A\subseteq F\right\}.
$

\subsection{Generating set method}

The generating set method was initially introduced by  Ahlswede and Khachatrian \cite{AK1996, AK1997} and subsequently advanced and applied in recent works \cite{Fil2017, L25,ZW2025}.
Let $\mathcal{A}\subseteq 2^{[n]}$ ($\emptyset\subsetneq\mathcal{A}\subsetneq 2^{[n]}$) be a monotone family. A \textit{generating set} of $\mathcal{A}$ is an inclusion-minimal   set  in $\mathcal{A}$. 
The \textit{generating family} of $\mathcal{A}$ consists of all generating sets of $\mathcal{A}$. Note that any generating family is an antichain.
 The \textit{extent} of $\mathcal{A}$ is defined as the maximal element appearing in a generating set of $\mathcal{A}$. The \textit{boundary generating family} of $\mathcal{A}$ consists of all generating sets of $\mathcal{A}$ containing its extent.

The following lemma is inspired by  \cite[Lemma 2.2]{F24}.

\begin{lemma}\label{le1}
Let $\mathcal{A}\subseteq 2^{[n]}$ be a monotone shifted family with its extent $\ell\geq  2$, generating family $\mathcal{G}$ and boundary generating family $\mathcal{G}[\ell]$.  For any families $\mathcal{B}, \mathcal{C}\subseteq \mathcal{G}[\ell]$, we denote $\mathcal{D}=(\mathcal{G}\setminus(\mathcal{B}\cup\mathcal{C}))\cup\{C\setminus\{\ell\}: C\in\mathcal{C}\}$. Then
$$
{ |\langle\mathcal{D}\rangle|=|\mathcal{A}|-|\mathcal{B}\backslash\mathcal{C}|2^{n-\ell}+|\mathcal{C}|2^{n-\ell}.}
$$
\end{lemma}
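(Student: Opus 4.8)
The plan is to localize the entire computation to the ground set $[\ell]$. Since $\ell$ is the extent of $\mathcal{A}$, every generating set lies in $2^{[\ell]}$, and the same holds for $\mathcal{D}$, because $\mathcal{G}\setminus\mathcal{B}\subseteq\mathcal{G}\subseteq 2^{[\ell]}$ and each $C\setminus\{\ell\}$ is a subset of $[\ell-1]$. Consequently both $\mathcal{A}=\langle\mathcal{G}\rangle$ and $\langle\mathcal{D}\rangle$ are determined by their traces on $[\ell]$: for a monotone family whose generators lie in $2^{[\ell]}$, a set $F\subseteq[n]$ belongs to it iff $F\cap[\ell]$ does, so $|\mathcal{A}|=2^{n-\ell}\,|\mathcal{A}\cap 2^{[\ell]}|$ and $|\langle\mathcal{D}\rangle|=2^{n-\ell}\,|\langle\mathcal{D}\rangle\cap 2^{[\ell]}|$. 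Because $u,v\leqslant\ell$, reindexing shows that both binomial sums in the statement collapse to the same value, namely $\sum_{j=u}^{n}\binom{n-\ell}{j-u}=\sum_{j=v-1}^{n}\binom{n-\ell}{j-v+1}=2^{n-\ell}$. Hence it suffices to prove the purely local identity
\[
|\langle\mathcal{D}\rangle\cap 2^{[\ell]}|=|\mathcal{A}\cap 2^{[\ell]}|-|\mathcal{B}|+|\mathcal{C}|.
\]

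I would deduce this from two set-difference claims, namely
\[
(\mathcal{A}\cap 2^{[\ell]})\setminus\langle\mathcal{D}\rangle=\mathcal{B}
\qquad\text{and}\qquad
(\langle\mathcal{D}\rangle\cap 2^{[\ell]})\setminus\mathcal{A}=\{C\setminus\{\ell\}:C\in\mathcal{C}\}.
\]
Granting these, the identity is immediate, since $C\mapsto C\setminus\{\ell\}$ is injective (every $C\in\mathcal{C}$ contains $\ell$), so the right-hand family has size $|\mathcal{C}|$. The two ``easy'' inclusions $\mathcal{B}\subseteq(\mathcal{A}\cap 2^{[\ell]})\setminus\langle\mathcal{D}\rangle$ and $\{C\setminus\{\ell\}:C\in\mathcal{C}\}\subseteq(\langle\mathcal{D}\rangle\cap 2^{[\ell]})\setminus\mathcal{A}$ use only that $\mathcal{G}$ is an antichain together with $u\neq v$: if some $B\in\mathcal{B}$ contained a member of $\mathcal{D}$, that member is either a set of $\mathcal{G}\setminus\mathcal{B}$ strictly inside $B$ (impossible for an antichain), or some $C\setminus\{\ell\}$ with $C\in\mathcal{C}$, and then $\ell\in B$ forces $C\subseteq B$, whence $C=B$ by the antichain property, contradicting $|C|=v\neq u=|B|$. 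The second inclusion is analogous, using that $C\setminus\{\ell\}$ lies strictly below $C\in\mathcal{G}$ and so cannot belong to $\mathcal{A}$.

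The substance of the argument, and the step I expect to be the main obstacle, is the reverse inclusion: no \emph{proper} superset inside $[\ell]$ may slip into either difference. This is exactly where shiftedness of $\mathcal{A}$ is essential. Suppose $T\subseteq[\ell]$ lies in $(\mathcal{A}\cap 2^{[\ell]})\setminus\langle\mathcal{D}\rangle$. Then $T$ contains no set of $\mathcal{G}\setminus\mathcal{B}$, so to be in $\mathcal{A}$ it must contain some $B\in\mathcal{B}$; in particular $\ell\in T$. If $T\supsetneq B$, choose $a\in T\setminus B$ (so $a<\ell$, as $\ell\in B$) and apply the shift $s_{a,\ell}$: since $\mathcal{A}$ is shifted and $B\in\mathcal{A}$ with $\ell\in B,\ a\notin B$, the set $B'=(B\setminus\{\ell\})\cup\{a\}$ also lies in $\mathcal{A}$. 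But $B'\subseteq[\ell-1]$, so the generating set witnessing $B'\in\mathcal{A}$ avoids $\ell$ and hence lies in $\mathcal{G}\setminus\mathcal{B}$; moreover it is contained in $B'\subseteq T$, contradicting $T\notin\langle\mathcal{D}\rangle$. Therefore $T=B$. An entirely parallel down-shift argument, now replacing $\ell$ by a smaller element $b\in T$ (note $\ell\notin T$ in this case) and shifting the relevant $C\in\mathcal{C}$, shows that every $T\in(\langle\mathcal{D}\rangle\cap 2^{[\ell]})\setminus\mathcal{A}$ equals some $C\setminus\{\ell\}$. Combining the two claims gives the local identity, and multiplying through by $2^{n-\ell}$ recovers the stated formula.
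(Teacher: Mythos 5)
Your proof is correct and follows essentially the same route as the paper's: the heart of both arguments is the identification of the two symmetric differences between $\mathcal{A}$ and $\langle\mathcal{D}\rangle$, using the antichain property of $\mathcal{G}$ together with $u\neq v$ for the easy inclusions, and the shift $s_{a,\ell}$ (producing a generating set that avoids $\ell$) for the hard ones. The only difference is organizational: you factor out the tails $T\subseteq[\ell+1,n]$ at the outset by localizing to $2^{[\ell]}$ and noting that both binomial sums collapse to $2^{n-\ell}$, whereas the paper carries the tails through the argument and counts them at the end.
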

\begin{proof}
For any $B\in \mathcal{B}$ and $C\in \mathcal{C}$, we have
 $B, C \subseteq [\ell]$. Hence,   $n-|B| \ge n-\ell$ and $n-|C|+1 \ge n-\ell+1 > n-\ell$.
In view of $|\langle\mathcal{D}\rangle|-|\mathcal{A}|=|\langle\mathcal{D}\rangle\setminus\mathcal{A}|-|\mathcal{A}\setminus\langle\mathcal{D}\rangle|$, 
it suffices to prove that
\begin{align*}
\langle\mathcal{D}\rangle\setminus\mathcal{A}=&\left\{(C\setminus\{\ell\})\cup T: C\in\mathcal{C},\ T\in\binom{[\ell+1,n]}{\leq n-|C|+1}\right\},\\
\mathcal{A}\setminus\langle\mathcal{D}\rangle=&\left\{B\cup T: B\in\mathcal{B}\backslash \mathcal{C},\ T\in\binom{[\ell+1,n]}{\leq n-|B|}\right\}.
 \end{align*}

We first prove the description of $\langle\mathcal{D}\rangle\setminus\mathcal{A}$.
For any $F\in\langle\mathcal{D}\rangle\setminus\mathcal{A}$, there exists some $C\in\mathcal{C}$ such that $C\setminus\{\ell\}\subseteq F$.  Since $C\subseteq [\ell]$, we have $C\setminus\{\ell\}\subseteq F\cap[\ell]$.
Moreover, $F\notin\mathcal{A}$  forces  $\ell \notin F\cap[\ell]$; otherwise $C\subseteq F\cap[\ell]$ would imply $F\in\mathcal{A}$, a contradiction.
If $C\setminus\{\ell\}\subsetneq F\cap[\ell]$, then there exists $i\in(F\cap[\ell])\setminus(C\setminus\{\ell\})$ with $i<\ell$. By shiftedness, $(C\setminus\{\ell\})\cup\{i\}\in\mathcal{A}$. Since 
$\mathcal{A}= \langle\mathcal{G}\rangle$, there exists $G\in\mathcal{G}$ such that $G\subseteq(C\setminus\{\ell\})\cup\{i\}\subseteq F\cap[\ell]$, which implies  $F\in\mathcal{A}$, a contradiction. 
Hence, $F\cap[\ell]=C\setminus\{\ell\}$. This implies that
$\langle\mathcal{D}\rangle\setminus\mathcal{A}\subseteq\left\{(C\setminus\{\ell\})\cup T: C\in\mathcal{C},\ T\in\binom{[\ell+1,n]}{\leq n-|C|+1}\right\}.$  
For the reverse, take any  $C\in\mathcal{C}$ and $ T\in\binom{[\ell+1,n]}{\leq n-|C|+1}$.
Since no subset of $C\setminus\{\ell\}$ belongs to $\mathcal{G}$,
 we have $(C\setminus\{\ell\})\cup T \notin \mathcal{A}$. Clearly, $(C\setminus\{\ell\})\cup T \in \langle\mathcal{D}\rangle$. 
Therefore, 
$\left\{(C\setminus\{\ell\})\cup T: C\in\mathcal{C},\ T\in\binom{[\ell+1,n]}{\leq n-|C|+1}\right\} \subseteq\langle\mathcal{D}\rangle\setminus\mathcal{A}$.

We now prove the  description of $\mathcal{A}\setminus\langle\mathcal{D}\rangle$.
For any $A\in\mathcal{A}\setminus\langle\mathcal{D}\rangle$, there exists some $B\in\mathcal{B}\backslash \mathcal{C}$ such that $B\subseteq A$. Then $B\subseteq A\cap[\ell]$.
Observe that $\ell\in B$. 
If $B\subsetneq A\cap[\ell]$, then there exists $i\in(A\cap[\ell])\setminus B$ with $i<\ell$.
By shiftedness, $(B\setminus\{\ell\})\cup\{i\}\in\mathcal{A}$. Since 
$\mathcal{A}= \langle\mathcal{G}\rangle$, there exists $G\in\mathcal{G}$ such that $G\subseteq(B\setminus\{\ell\})\cup\{i\}\subseteq A\cap[\ell]$.
In view of $\ell \notin G$, we have $G\in\mathcal{G}\setminus(\mathcal{B}\cup\mathcal{C})$.
Then $A\in\langle\mathcal{D}\rangle$, a contradiction. Hence, we have $A\cap[\ell]=B$, which implies 
$\mathcal{A}\setminus\langle\mathcal{D}\rangle\subseteq\left\{B\cup T: B\in\mathcal{B}\backslash \mathcal{C},\ T\in\binom{[\ell+1,n]}{\leq n-|B|}\right\}$.
For the reverse inclusion, take any  $B\in\mathcal{B}\backslash \mathcal{C}$ and $ T\in\binom{[\ell+1,n]}{\leq n-|B|}$. Note that no subset of $B$ belongs to $\mathcal{G}\setminus(\mathcal{B}\cup  \mathcal{C})$.
If  $B\cup T \in \langle\mathcal{D}\rangle$, then there exists some  
$C\in\mathcal{C}$ such that $C\setminus\{\ell\}\subseteq B$.
Since $\ell\in B$ and $B\in\mathcal{B}\backslash \mathcal{C}$, we  have $C\subsetneq B$. 
But $\mathcal{B}\cup\mathcal{C}$ is an antichain because $\mathcal{G}$ is a  generating family, a contradiction.
Hence, $B\cup T \notin \langle\mathcal{D}\rangle$, while clearly $B\cup T \in \mathcal{A}$. Consequently,
$\left\{B\cup T: B\in\mathcal{B}\backslash \mathcal{C},\ T\in\binom{[\ell+1,n]}{\leq n-|B|}\right\}\subseteq\mathcal{A}\setminus\langle\mathcal{D}\rangle$.
\end{proof}

For convenience, we introduce the following notation and terminology.

Let $(2^{[n]})^{m} = 2^{[n]} \times \cdots \times 2^{[n]}$. For any $m$-tuple \(\vec{\mathcal{A}} = (\mathcal{A}_1, \ldots, \mathcal{A}_m) \in (2^{[n]})^{m} \),  the $1$-norm of $\vec{\mathcal{A}}$ is defined as  $\|\vec{\mathcal{A}}\| = \sum_{i=1}^{m} |\mathcal{A}_i|$.  We denote $\langle\vec{\mathcal{A}}\rangle=(\langle\mathcal{A}_1\rangle, \ldots, \langle\mathcal{A}_m\rangle),$ where $
\langle\mathcal{A}_i\rangle=\left\{F\subseteq [n]: \exists A \in\mathcal{A}_i\text{ such that }A\subseteq F\right\}.
$
We say that an $m$-tuple \( \vec{\mathcal{A}} \) is \textit{shifted} (\textit{non-empty, monotone, respectively}) if \( \mathcal{A}_i \) is shifted (non-empty, monotone, respectively) for every $i\in [m]$.   Moreover, we say that \( \vec{\mathcal{A}} \) is \textit{  pairwise cross $t$-intersecting} if $\mathcal{A}_1, \ldots, \mathcal{A}_m$ are  pairwise cross $t$-intersecting.
For any $i\in [m]$,
let $\mathcal{G}_i$ be the generating family of $\mathcal{A}_i$.
We say that $\vec{\mathcal{G}} = (\mathcal{G}_1, \ldots, \mathcal{G}_m)$ is the \textit{generating sequence} of $\vec{\mathcal{A}}$. 
The \textit{extent} of $\vec{\mathcal{A}}$ is defined as the maximal extent of $\mathcal{A}_i$ for all $i\in[m]$.

Let $\vec{\mathcal{A}} = (\mathcal{A}_1, \ldots, \mathcal{A}_m)\in (2^{[n]})^{m}$ and $u\in [n]$. 
We define
 $\vec{\mathcal{A}}^{(u)}=(\mathcal{A}_1^{(u)},\ldots, \mathcal{A}_m^{(u)})$ and $\vec{\mathcal{A}}[u]=(\mathcal{A}_1[u],\ldots, \mathcal{A}_m[u])$, and similarly, we define $\vec{\mathcal{A}}(u)=(\mathcal{A}_1(u),\ldots, \mathcal{A}_m(u))$ and $\vec{\mathcal{A}}(\bar{u})=(\mathcal{A}_1(\bar{u}),\ldots, \mathcal{A}_m(\bar{u}))$.
In addition, for any $\vec{\mathcal{A}}, \vec{\mathcal{B}}\in (2^{[n]})^{m}$, we define
$$ \vec{\mathcal{A}} + \vec{\mathcal{B}} = (\mathcal{A}_1 \cup \mathcal{B}_1, \ldots, \mathcal{A}_m \cup \mathcal{B}_m) \quad \text{and} \quad \vec{\mathcal{A}}- \vec{\mathcal{B}} = (\mathcal{A}_1\setminus\mathcal{B}_1, \ldots, \mathcal{A}_m \setminus \mathcal{B}_m). $$

\begin{lemma}\label{le2}
 Let $\vec{\mathcal{A}} = (\mathcal{A}_1, \ldots, \mathcal{A}_m) \in (2^{[n]})^{m}$ be  a monotone shifted pairwise cross $t$-intersecting sequence with  generating sequence  $\vec{\mathcal{G}} = (\mathcal{G}_1, \ldots, \mathcal{G}_m)$. Let $\ell$ be the extent of $\vec{\mathcal{A}}$. If 
$A\in \mathcal{G}_i[\ell]$ and  $B\in \mathcal{G}_j[\ell]$  satisfy $|A\cap B|=t$ for some $i\neq j\in [m]$, then $A\cup B=[\ell]$ and $|A|+|B|=t+\ell$.
\end{lemma}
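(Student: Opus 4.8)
The plan is to exploit the shiftedness of the families together with the fact that $\ell$ is the global extent, meaning no generating set of any $\mathcal{A}_k$ contains an element larger than $\ell$. Since $A\in\mathcal{G}_i[\ell]$ and $B\in\mathcal{G}_j[\ell]$, both $A$ and $B$ are subsets of $[\ell]$ that contain $\ell$. The pairwise cross $t$-intersecting property forces $|A\cap B|\geqslant t$, and we are in the boundary case $|A\cap B|=t$. The identity $|A|+|B|=|A\cup B|+|A\cap B|=|A\cup B|+t$ shows that the claim $A\cup B=[\ell]$ and the claim $|A|+|B|=t+\ell$ are equivalent; so it suffices to prove $A\cup B=[\ell]$.

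First I would argue by contradiction: suppose $A\cup B\subsetneq[\ell]$. Since both sets live inside $[\ell]$, there exists some index $i_0\in[\ell-1]$ (note $\ell\in A\cap B$, so the missing index is strictly below $\ell$) with $i_0\notin A\cup B$. The idea is to use this free coordinate to perform a shift that lowers $\ell$ out of one of the sets while keeping it inside $[\ell]$, thereby violating minimality or the intersection condition. Concretely, consider the shift $s_{i_0,\ell}$ applied to, say, $\mathcal{A}_j$ (or reason directly from shiftedness). Because $\mathcal{A}_j$ is shifted and $i_0\notin B$, $\ell\in B$, the set $B'=(B\setminus\{\ell\})\cup\{i_0\}$ must already belong to $\mathcal{A}_j$; hence there is a generating set $G\in\mathcal{G}_j$ with $G\subseteq B'$. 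The key point is then to compare $|A\cap G|$ with $t$: since $i_0\notin A$ and $G\subseteq (B\setminus\{\ell\})\cup\{i_0\}$, we have $A\cap G\subseteq A\cap((B\setminus\{\ell\})\cup\{i_0\})=A\cap(B\setminus\{\ell\})$, and because $\ell\in A\cap B$ this is a proper subset of $A\cap B$, giving $|A\cap G|\leqslant |A\cap B|-1=t-1$. This contradicts the pairwise cross $t$-intersecting property applied to $A\in\mathcal{A}_i$ and $G\in\mathcal{A}_j$ (valid since $i\neq j$).

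I expect the main obstacle to be handling the degenerate possibilities cleanly: one must verify that $G$ is genuinely a generating set of $\mathcal{A}_j$ distinct enough that $A\cap G$ really drops below $t$, and in particular that the removal of $\ell$ is not compensated by $i_0$ lying in $A$ — which is exactly why choosing $i_0\notin A\cup B$ (rather than merely $i_0\notin B$) is essential. The case $i\ne j$ guarantees the cross-intersection inequality is the right one to invoke, so no self-intersection subtlety arises. Once the contradiction is reached, we conclude $A\cup B=[\ell]$, and the cardinality identity $|A|+|B|=t+\ell$ follows immediately from inclusion–exclusion, completing the proof.
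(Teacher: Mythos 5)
Your proposal is correct and follows essentially the same argument as the paper: assume $A\cup B\subsetneq[\ell]$, pick a missing element below $\ell$, use shiftedness to replace $\ell$ by that element in one of the two sets, and derive an intersection of size $t-1$, contradicting the cross $t$-intersecting property. The only cosmetic differences are that the paper shifts $A$ rather than $B$, and your detour through a generating set $G\subseteq B'$ is unnecessary (since $B'\in\mathcal{A}_j$ itself, the cross-intersecting property applies to $A$ and $B'$ directly), though harmless.
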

\begin{proof}
Firstly, we have $A\cup B\subseteq[\ell]$ and $\ell\in A\cap B$. If $A\cup B\subsetneq[\ell]$, then there exists  $x\in[\ell]\setminus(A\cup B)$ and $x<\ell$. By the shiftedness, we get $(A\setminus\{\ell\})\cup\{x\}\in\mathcal{A}_i$ 
and $|((A\setminus\{\ell\})\cup\{x\})\cap B|=|(A\cap B)\setminus\{\ell\}|=t-1$, a contradiction. Then $A\cup B=[\ell]$ and $|A|+|B|=|A\cap B|+|A\cup B|=t+\ell$.
\end{proof}

Recall that $ 
\mathcal{A}_i[u]^{(k)}=\{A\in\mathcal{A}_i: u\in A,|A|=k\}$ 
and
$\vec{\mathcal{A}}[u]^{(k)}=(\mathcal{A}_1[u]^{(k)},\ldots, \mathcal{A}_m[u]^{(k)})$. Inspired by \cite[Lemma 3.1]{L25}, we establish the following result.

\begin{lemma}\label{le22}
  Let $\vec{\mathcal{A}} = (\mathcal{A}_1, \ldots, \mathcal{A}_m) \in (2^{[n]})^{m}$ be  a non-empty monotone shifted pairwise cross $t$-intersecting sequence with  generating sequence  $\vec{\mathcal{G}} = (\mathcal{G}_1, \ldots, \mathcal{G}_m)$. Let $\ell$ be the extent of $\vec{\mathcal{A}}$ with $\ell> t$.
Assume that $\vec{\mathcal{G}}[\ell]^{(t)}=(\emptyset,\ldots,\emptyset)$. Then 
the following statements hold.
\begin{enumerate}
    \item[(1)] If there exist some $u\neq v\in [t,\ell]$ with  $u+v=\ell+t$ such that exactly one of  $\vec{\mathcal{G}}[\ell]^{(u)}$ and $\vec{\mathcal{G}}[\ell]^{(v)}$ is not equal to  $(\emptyset,\ldots, \emptyset)$, then there exists a non-empty monotone pairwise cross $t$-intersecting sequence $\vec{\mathcal{B}}$  with its extent at most $\ell$ such that $\|\vec{\mathcal{B}}\|>\|\vec{\mathcal{A}}\|$.
    
    \item[(2)]  If  $2\nmid \ell+t$, then there exists a non-empty monotone pairwise cross $t$-intersecting sequence $\vec{\mathcal{B}}$  with its extent less than $\ell$  such that $\|\vec{\mathcal{B}}\|\geq \|\vec{\mathcal{A}}\|$.

    \item[(3)]   If  $2\mid \ell+t$ and for all pairs $u\neq v\in [t,\ell]$ with  $u+v=\ell+t$,  either $\vec{\mathcal{G}}[\ell]^{(u)}\neq (\emptyset,\ldots, \emptyset)$ and $\vec{\mathcal{G}}[\ell]^{(v)}\neq (\emptyset,\ldots, \emptyset)$, or $\vec{\mathcal{G}}[\ell]^{(u)}=\vec{\mathcal{G}}[\ell]^{(v)}=(\emptyset,\ldots, \emptyset)$, then there exists a non-empty monotone shifted pairwise cross $t$-intersecting sequence $\vec{\mathcal{B}}$  with its extent at most $\ell$  such that $\|\vec{\mathcal{B}}\|\geq \|\vec{\mathcal{A}}\|$. Moreover, let $\vec{\mathcal{G}}^*$ be the generating sequence of $\vec{\mathcal{B}}$.
If the extent of  $\vec{\mathcal{B}}$ is $\ell$, then  $\vec{\mathcal{G}}^*[\ell]= \vec{\mathcal{G}}[\ell]^{(\frac{\ell+t}{2})}$.
\end{enumerate}
\end{lemma}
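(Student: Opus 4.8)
The plan is to prove all three parts by one pushing--pulling operation applied to the boundary generating family $\vec{\mathcal{G}}[\ell]$, deciding at each complementary pair of sizes whether to \emph{pull down} a generator $C$ to $C\setminus\{\ell\}$ or to \emph{delete} it, and then reading off the change of size from (a multi-size version of) Lemma~\ref{le1} and the preservation of cross $t$-intersection from Lemma~\ref{le2}. First I would make two reductions. Since each $\mathcal{A}_i=\langle\mathcal{G}_i\rangle$ is monotone, $A\cap B\supseteq G\cap G'$ whenever $G\subseteq A$ and $G'\subseteq B$, so the pairwise cross $t$-intersecting property of $\vec{\mathcal{B}}$ need only be verified on its generators. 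Second, both binomial sums in Lemma~\ref{le1} equal $\sum_{k=0}^{n-\ell}\binom{n-\ell}{k}=2^{n-\ell}$; hence pulling a size-$v$ boundary generator down changes $|\mathcal{A}_i|$ by $+2^{n-\ell}$ and deleting a size-$u$ one by $-2^{n-\ell}$, so the net effect of processing a pair on $\|\vec{\mathcal{A}}\|$ is $(N_v-N_u)\,2^{n-\ell}$, where $N_s:=\sum_i|\mathcal{G}_i[\ell]^{(s)}|$.

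The safety rule I would isolate from Lemma~\ref{le2} is this: replacing $C$ (with $\ell\in C$, $|C|=v$) by $C\setminus\{\ell\}$ can only drop an intersection $|C\cap G'|$ below $t$ when $|C\cap G'|=t$ for a generator $G'$ in another family, and then Lemma~\ref{le2} forces $|G'|=\ell+t-v$. Thus pulling down size $v$ is legal as soon as its complementary size $\ell+t-v$ has been deleted from every family, and as soon as no two \emph{pulled} sizes sum to $\ell+t$ (which prevents two pulled-down sets from colliding). For part~(1), where some pair $\{u,v\}$ has exactly one side nonempty, I would pull down precisely that nonempty side $s$: its complement is absent, and $u\neq v$ forbids the self-collision $s=\tfrac{\ell+t}{2}$, so the move touches no other generator, keeps the extent $\leqslant\ell$, and strictly increases the total size by $N_s\,2^{n-\ell}>0$.

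For parts~(2) and~(3) I would process all complementary pairs $\{u,v\}$ with $u<v$ at once. The balanced hypothesis makes each such pair either wholly absent or wholly present; for a present pair I pull down the side with the larger $N_s$ and delete the other, contributing $|N_v-N_u|\,2^{n-\ell}\geqslant 0$ to the size. Choosing exactly one side of each pair to pull guarantees that no two pulled sizes sum to $\ell+t$, and every deleted size is the complement of a pulled one, so the safety rule yields that $\vec{\mathcal{B}}$ is again pairwise cross $t$-intersecting; a short shifting argument (shift the smallest generator of a family whose generators all contain $\ell$) shows every family still has a generator avoiding $\ell$, so $\vec{\mathcal{B}}$ stays non-empty. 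Writing $V$ and $U$ for the sets of pulled and deleted sizes, the identity $|\langle\mathcal{D}_i\rangle|=|\mathcal{A}_i|+\Bigl(\sum_{s\in V}|\mathcal{G}_i[\ell]^{(s)}|-\sum_{s\in U}|\mathcal{G}_i[\ell]^{(s)}|\Bigr)2^{n-\ell}$ comes from the evident multi-size refinement of Lemma~\ref{le1}, valid because the added and deleted strata occupy distinct cardinalities. In the odd case~(2) every boundary size lies in a genuine pair with $u\neq v$, so all are cleared and the extent drops below $\ell$; in the even case~(3) the central size $\tfrac{\ell+t}{2}$ is its own complement and cannot be pulled (the self-collision $2\cdot\tfrac{\ell+t}{2}=\ell+t$ would occur), so those generators survive untouched, whence the extent may stay $\ell$ and then $\vec{\mathcal{G}}'[\ell]=\vec{\mathcal{G}}[\ell]^{(\frac{\ell+t}{2})}$.

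The step I expect to be the main obstacle is the cross $t$-intersection bookkeeping in parts~(2)--(3): I must rule out, all at once, three kinds of new collisions---a pulled set against an untouched generator, a pulled set against another pulled set, and a pulled set against a surviving central generator---using only Lemma~\ref{le2} and the ``one side per pair'' choice. The clean invariant that makes this work is that the multiset of pulled sizes, together with the surviving central size, contains no two entries summing to $\ell+t$. A secondary difficulty is that a single pull-down destroys shiftedness, so the whole operation must be performed in one shot on the original shifted families and its size effect computed directly from them, rather than by iterating Lemma~\ref{le1}.
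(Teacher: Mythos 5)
Your proposal is correct and follows essentially the same route as the paper's proof: the same pull-down/delete operation on the boundary strata, with Lemma~\ref{le2} supplying exactly your ``safety rule'' and Lemma~\ref{le1} (both binomial sums there equal $2^{n-\ell}$) supplying the size bookkeeping; the only differences of detail are that for each present pair the paper forms both candidate sequences $\vec{\mathcal{G}}-\vec{\mathcal{G}}[\ell]^{(u)}+\vec{\mathcal{G}}[\ell]^{(v)}(\ell)$ and $\vec{\mathcal{G}}-\vec{\mathcal{G}}[\ell]^{(v)}+\vec{\mathcal{G}}[\ell]^{(u)}(\ell)$ and uses the averaging identity $\|\langle\vec{\mathcal{G}}'\rangle\|+\|\langle\vec{\mathcal{G}}''\rangle\|=2\|\vec{\mathcal{A}}\|$ rather than directly pulling the side with larger total. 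Your explicit one-shot processing of all pairs, via the multi-size refinement of Lemma~\ref{le1}, is if anything slightly more careful than the paper's ``make the operation for all pairs'' phrasing, which tacitly assumes the intermediate families remain shifted after each single pair is processed.
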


\begin{proof}
(1)  By symmetry, we may assume that $\vec{\mathcal{G}}[\ell]^{(u)}\neq (\emptyset,\ldots, \emptyset)$ and $\vec{\mathcal{G}}[\ell]^{(v)}= (\emptyset,\ldots, \emptyset)$.
In view of $\vec{\mathcal{G}}[\ell]^{(t)}=(\emptyset,\ldots,\emptyset)$, we have $u>t$.
Recall that for a family $\mathcal{A}\subseteq 2^{[n]}$ and an integer $\ell \in [n]$, we denote $\mathcal{A}(\ell) = \{A\backslash \{\ell\}: \ell\in A\in \mathcal{A}\}$.
Let  
$$ \vec{\mathcal{G}}'= \vec{\mathcal{G}}- \vec{\mathcal{G}}[\ell]^{(u)}+\vec{\mathcal{G}}[\ell]^{(u)}(\ell),$$
where we write $\vec{\mathcal{G}}[\ell]^{(u)}(\ell)= \left(\mathcal{G}_1[\ell]^{(u)}(\ell),\ldots, \mathcal{G}_m[\ell]^{(u)}(\ell) \right)$. 
By Lemma \ref{le2}, we know that $ \langle\vec{\mathcal{G}}'\rangle$ is a non-empty
pairwise cross $t$-intersecting sequence. In addition, we have $\|\langle\vec{\mathcal{G}}'\rangle\|>\|\langle\vec{\mathcal{G}}\rangle\|=\|\vec{\mathcal{A}}\|$. Setting $\vec{\mathcal{B}}=\langle\vec{\mathcal{G}}'\rangle$ yields the desired result.

(2)   
Since $\vec{\mathcal{G}}[\ell]\neq (\emptyset,\ldots, \emptyset)$  and $2\nmid \ell+t$, it follows  that there exist $u\neq v\in [t,\ell]$ with  $u+v=\ell+t$ such that at least one of $\vec{\mathcal{G}}[\ell]^{(u)}$ and $\vec{\mathcal{G}}[\ell]^{(v)}$ is not equal to  $(\emptyset,\ldots, \emptyset)$. 
For every such pair,  we choose exactly one of the two boundary layers having
the larger norm; in case of equality, choose either one.
Let $\mathcal C$ be the union of all chosen boundary layers.
Then $\|\vec{\mathcal{G}}[\ell]- \mathcal C\|\leq \|\mathcal C\|$ and $ \|\mathcal C\|>0$.
Define 
$$
\vec{\mathcal{G}}^*
=
\vec{\mathcal{G}}-\vec{\mathcal{G}}[\ell]+
\{C\setminus\{\ell\}:C\in\mathcal C\},\qquad \vec{\mathcal{B}}=\langle\vec{\mathcal{G}}^*\rangle.
$$
We also note that every component $\mathcal G_k^{*}$ is non-empty.
Indeed, this is immediate if
$\mathcal G_k\setminus\mathcal G_k[\ell]\ne\emptyset$.
Otherwise every generator in $\mathcal G_k$ contains $\ell$.
Shiftedness then forces $\mathcal G_k=\{[\ell]\}$. Since
$\vec{\mathcal G}[\ell]^{(t)}=(\emptyset,\ldots,\emptyset)$, the
$\ell$-layer is selected from the complementary pair $\{t,\ell\}$.
Consequently, $[\ell-1]\in\mathcal G_k^{*}$.
By  Lemma \ref{le2}, $\vec{\mathcal{B}}$ is pairwise cross $t$-intersecting. 
Morover, Lemma \ref{le1} implies
\begin{align*}
\vec{\mathcal{B}}=\|\langle \vec{\mathcal{G}}^*\rangle\| =\|\vec{\mathcal{A}}\|-\|\vec{\mathcal{G}}[\ell]-\mathcal C\|2^{n-\ell}+\|\mathcal C\|2^{n-\ell}\geq \|\vec{\mathcal{A}}\|.
\end{align*}
Thus, we obtain  a  non-empty pairwise cross $t$-intersecting sequence $\vec{\mathcal{B}}$  with its extent less than $\ell$ and $\|\vec{\mathcal{B}}\|\geq \|\vec{\mathcal{A}}\|$.

(3)   By the given conditions,  for all pairs $u\neq v\in [t,\ell]$ with  $u+v=\ell+t$, either $\vec{\mathcal{G}}[\ell]^{(u)}\neq (\emptyset,\ldots, \emptyset)$ and $\vec{\mathcal{G}}[\ell]^{(v)}\neq (\emptyset,\ldots, \emptyset)$, or $\vec{\mathcal{G}}[\ell]^{(u)}=\vec{\mathcal{G}}[\ell]^{(v)}=(\emptyset,\ldots, \emptyset)$.
Since $\vec{\mathcal{G}}[\ell]\neq (\emptyset,\ldots, \emptyset)$, it follows  that either there exist $u\neq v\in [t,\ell]$ with  $u+v=\ell+t$ such that both $\vec{\mathcal{G}}[\ell]^{(u)}$ and $\vec{\mathcal{G}}[\ell]^{(v)}$ are  not equal to  $(\emptyset,\ldots, \emptyset)$, or  $2\mid \ell+t$ and $\vec{\mathcal{G}}[\ell]= \vec{\mathcal{G}}[\ell]^{(\frac{\ell+t}{2})}$. 
If the latter holds, then we are done. We now turn to the former.
From $\vec{\mathcal{G}}[\ell]^{(t)}=(\emptyset,\ldots,\emptyset)$, we obtain $t<u, v<\ell$. 
Combining this with the shiftedness, we obtain that
$\mathcal{G}_k\backslash \mathcal{G}_k[\ell]\neq \emptyset$  for all $k\in[m]$.
For every pair
$u\neq v\in [t,\ell]$ with $u+v=\ell+t$ such that both
$\vec{\mathcal{G}}[\ell]^{(u)}$ and $\vec{\mathcal{G}}[\ell]^{(v)}$ are not equal to
$(\emptyset,\ldots,\emptyset)$,  we choose exactly one of the two boundary layers having
the larger norm; in case of equality, choose either one.
Let $\mathcal C$ be the union of all chosen boundary layers.
Then $\|\vec{\mathcal{G}}[\ell]-\vec{\mathcal{G}}[\ell]^{(\frac{\ell+t}{2})}- \mathcal C\|\leq \|\mathcal C\|$.
Define 
$$
\vec{\mathcal{G}}^*
=
\vec{\mathcal{G}}-\vec{\mathcal{G}}[\ell]+\vec{\mathcal{G}}[\ell]^{(\frac{\ell+t}{2})}+
\{C\setminus\{\ell\}:C\in\mathcal C\},\qquad \vec{\mathcal{B}}=\langle\vec{\mathcal{G}}^*\rangle.
$$

We now verify that $\vec{\mathcal B}$ is shifted. Fix $k\in[m]$
and $1\leq i<j\leq n$. Let $F\in\mathcal B_k$ satisfy
$j\in F$ and $i\notin F$, and put
$
  F'=(F\setminus\{j\})\cup\{i\}.
$
Choose $D\in\mathcal G_k^{*}$ such that $D\subseteq F$. If
$j\notin D$, then $D\subseteq F'$, and hence $F'\in\mathcal B_k$.
We may therefore assume that $j\in D$.
First, suppose that
$
  D\in\mathcal G_k\setminus\mathcal G_k[\ell].
$
Then $\ell\notin D$. Since $\mathcal A_k$ is shifted,
$
  D'=(D\setminus\{j\})\cup\{i\}\in\mathcal A_k.
$
Hence, there exists $E\in\mathcal G_k$ with $E\subseteq D'$. Since
$\ell\notin D'$, we also have $\ell\notin E$, so $E$ is retained in
$\mathcal G_k^{*}$. Thus $E\subseteq F'$ and
$F'\in\mathcal B_k$.
Next, suppose that
$
  D\in\mathcal G_k[\ell]^{(\frac{\ell+t}{2}}.
$
If $j=\ell$, then $D'=(D\setminus\{\ell\})\cup\{i\}$ contains no
$\ell$, and the preceding argument applies. Suppose that $j<\ell$.
Choose $E\in\mathcal G_k$ with $E\subseteq D'$. If $\ell\notin E$,
then $E$ is retained. If $\ell\in E$ and
$|E|=\frac{\ell+t}{2}$, then $E=D'$ and $E$ belongs to the retained middle
boundary layer. Finally, if $\ell\in E$ and
$|E|<\frac{\ell+t}{2}$, choose
$
  h\in D'\setminus E.
$
Then $h<\ell$, and shiftedness gives
$
  (E\setminus\{\ell\})\cup\{h\}\in\mathcal A_k.
$
This set is contained in $D'$ and contains no $\ell$, so it contains
some non-boundary generator belonging to
$\mathcal G_k\setminus\mathcal G_k[\ell]$, which is retained in
$\mathcal G_k^{*}$. Hence again $F'\in\mathcal B_k$.
Finally, suppose that
$
  D=C\setminus\{\ell\}
$
for some generator $C$. Since
$j\in D$ and $i\notin D$, shiftedness of $\mathcal A_k$ gives
$
  C'=(C\setminus\{j\})\cup\{i\}\in\mathcal A_k.
$
Choose $E\in\mathcal G_k$ with $E\subseteq C'$. If $\ell\notin E$,
then $E\subseteq D'$ and $E$ is retained, where
$
  D'=(D\setminus\{j\})\cup\{i\}.
$
If $\ell\in E$ and $|E|=|C|$, then $E=C'$. Since the entire
$|C|$-boundary layer was selected, we have
$
  E\setminus\{\ell\}=D'\in\mathcal G_k^{*}.
$
If $\ell\in E$ and $|E|<|C|$, choose
$
  h\in D'\setminus(E\setminus\{\ell\}).
$
Then
$
  (E\setminus\{\ell\})\cup\{h\}\in\mathcal A_k
$
by shiftedness. This is a subset of $D'$ not containing $\ell$, and
therefore it contains a retained non-boundary generator. Thus
$F'\in\mathcal B_k$ in all cases.

We conclude that each $\mathcal B_k$ is shifted, and hence
$\vec{\mathcal B}$ is a shifted sequence.
By  Lemma \ref{le2}, $\vec{\mathcal{B}}$ is also pairwise cross $t$-intersecting. 
Morover, Lemma \ref{le1} implies
\begin{align*}
\vec{\mathcal{B}}=\|\langle \vec{\mathcal{G}}^*\rangle\| =\|\vec{\mathcal{A}}\|-\|\vec{\mathcal{G}}[\ell]-\vec{\mathcal{G}}[\ell]^{(\frac{\ell+t}{2})}- \mathcal C\|2^{n-\ell}+\|\mathcal C\|2^{n-\ell}\geq \|\vec{\mathcal{A}}\|.
\end{align*}
Thus, we  obtain a non-empty monotone shifted pairwise cross $t$-intersecting sequence $\vec{\mathcal{B}}$  with its extent at most $\ell$ and $\|\vec{\mathcal{B}}\|\geq \|\vec{\mathcal{A}}\|$. 
After deleting redundant members from $\vec{\mathcal G}^{*}$ and keeping the same notation, every $M\in\mathcal G_k[\ell]^{(\frac{\ell+t}{2})}$ remains minimal, since a retained non-boundary generator contained in $M$ would contradict the antichain property of $\mathcal G_k$, while $C\setminus\{\ell\}\subseteq M$ would imply $C\subseteq M$, again contradicting the antichain property of
$\mathcal G_k$. As all other members avoid $\ell$, we obtain $\vec{\mathcal{G}}^*[\ell]= \vec{\mathcal{G}}[\ell]^{(\frac{\ell+t}{2})}$ whenever $\vec{\mathcal B}$ has extent $\ell$.
\end{proof}

\subsection{Pushing-pulling method}
The pushing-pulling method was  introduced by  Ahlswede and Khachatrian \cite{AK1999}.
For a set $A \subseteq [n]$ and integers $1 \leq i\neq j \leq n$, 
let $A\Delta \{i,j\}$ denote the symmetric difference of $A$ and $ \{i,j\}$.
Let 
$A_{i,j}$ denote the $(i,j)$-\textit{exchange operator}, where
$$
A_{i,j}= \begin{cases}A\Delta \{i,j\} & \text { if } |A\cap \{i,j\}|=1; \\ A & \text { otherwise.}\end{cases}
$$ 
Clearly, we have $|A_{i,j}|=|A|$.
Let $\mathcal{A} \subseteq 2^{[n]}$ be a family. Define  $\mathcal{A}_{i,j}=\{A_{i,j}: A\in \mathcal{A}\}$. We say that $\mathcal{A}$ is \textit{exchange stable} on $[s]$ if $
\mathcal{A} = \mathcal{A}_{i,j} \text{ for all } 1 \leq i\neq j \leq s$.
Note that in this case if $\{A\cap [s]: A \in \mathcal{A}\}$ contains an $a$-element subset, then it contains all $a$-element subsets of $[s]$.
The \textit{symmetric extent} of $\mathcal{A}$ is the largest integer $s$ such that $\mathcal{A}$ is  exchange stable on $[s]$, i.e.,
$$
\mathcal{A} = \mathcal{A}_{i,j} \text{ for all } 1 \leq i\neq j \leq s, \text{ but } \mathcal{A} \neq \mathcal{A}_{i,s+1} \text{ for some } 1 \leq i \leq s.
$$
{ For any $r \in [n-1]$}, we define
$$
\mathcal{A}^{-r} = \{ A \in \mathcal{A}: A_{i,r+1} \notin \mathcal{A} \text{ for some } 1 \leq i \leq r \}.
$$
If $\mathcal{A}\neq \emptyset$ with its symmetric extent $s<n$,  then $\mathcal{A}^{-s}\neq \emptyset$ and $\mathcal{A}^{-r}=\emptyset$ for all $r<s$.

 For \(\vec{\mathcal{A}} = (\mathcal{A}_1, \ldots, \mathcal{A}_m) \in (2^{[n]})^{m} \), the \textit{symmetric extent} of $\vec{\mathcal{A}}$ is defined as the minimal symmetric extent of $\mathcal{A}_i$ for all $i\in[m]$. 
We need the following crucial properties concerning the symmetric extent, which is inspired by \cite[Lemma 2]{AK1999}.

\begin{lemma}\label{le3}
Let $\vec{\mathcal{A}} = (\mathcal{A}_1, \ldots, \mathcal{A}_m) \in (2^{[n]})^{m}$ be  a shifted  sequence. Let $s$ be the symmetric extent of $\vec{\mathcal{A}}$ with $s<n$. 
Then the following statements hold.
\begin{enumerate}
    \item[(1)] If  $\mathcal{A}_k^{-s}\neq \emptyset$, then $s + 1 \notin A$ for any $A \in \mathcal{A}_k^{-s}$.
    
    \item[(2)] If  $\mathcal{A}_k^{-s}\neq \emptyset$, then  $A_{j,s+1} \notin \mathcal{A}_k$ for any $A \in \mathcal{A}_k^{-s}$ and $j \in A\cap [s]$.
    
    \item[(3)] If  $\mathcal{A}_k^{-s}\neq \emptyset$ and $A= B \cup C \in \mathcal{A}_k^{-s}$, where $B = A \cap [1,s]$ and $C = A \cap [s+ 1,n]$,
then $B' \cup C \in \mathcal{A}_k^{-s}$ for every $B' \subseteq [s]$ with $|B'| = |B|$.
    
    \item[(4)]  Assume that $\vec{\mathcal{A}}$ is pairwise cross $t$-intersecting,  $\mathcal{A}_k^{-s}\neq \emptyset$ and $ \mathcal{A}_q \setminus \mathcal{A}_q^{-s}\neq \emptyset$ for some $k\neq q \in [m]$. Let $A \in \mathcal{A}_k^{-s}$ and $D \in \mathcal{A}_q \setminus \mathcal{A}_q^{-s}$. Then
    \[ |A_{i,s+1} \cap D| \geq  t \text{ for all } 1 \leq i \leq s. \]

    \item[(5)]  Assume that $\vec{\mathcal{A}}$ is pairwise cross $t$-intersecting,  $\mathcal{A}_k^{-s}\neq \emptyset$ and $ \mathcal{A}_q^{-s}\neq \emptyset$ for some $k\neq q \in [m]$. Let $A_1\in \mathcal{A}_k^{-s}$ and $A_2\in \mathcal{A}_q^{-s}$ be such that $|A_1\cap [s]|+|A_2\cap [s]|\neq s+t$. Then $|A_1 \cap A_2| \geq  t + 1$.
\end{enumerate}

\end{lemma}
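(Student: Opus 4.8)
The plan is to treat the five parts in the order stated, since each relies on the structural facts established by the previous ones, and throughout I will exploit two basic features of the hypothesis. First, because $s$ is the \emph{minimal} symmetric extent among the $\mathcal{A}_i$, every $\mathcal{A}_k$ is exchange stable on $[s]$, so membership of a set $F$ in $\mathcal{A}_k$ depends only on $|F\cap[s]|$ and on $F\cap[s+1,n]$; in particular two sets that agree on $[s+1,n]$ and have the same size trace on $[s]$ are simultaneously in or out of $\mathcal{A}_k$. Second, each $\mathcal{A}_k$ is shifted, so whenever $F\in\mathcal{A}_k$, $j\in F$ and $i<j$ with $i\notin F$, we have $(F\setminus\{j\})\cup\{i\}\in\mathcal{A}_k$. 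I will also use that the exchange operator is an involution, so that $\mathcal{A}_k=(\mathcal{A}_k)_{i,j}$ forces the equivalence $F\in\mathcal{A}_k\Leftrightarrow F_{i,j}\in\mathcal{A}_k$.

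For (1) I would argue by contradiction: if $A\in\mathcal{A}_k^{-s}$ contained $s+1$, then for \emph{every} $i\le s$ the set $A_{i,s+1}$ would lie in $\mathcal{A}_k$ — directly when $i\in A$ (the operator fixes $A$), and by one application of shiftedness when $i\notin A$ — contradicting $A\in\mathcal{A}_k^{-s}$. For (2), part (1) gives $s+1\notin A$, and the witnessing index $i$ with $A_{i,s+1}\notin\mathcal{A}_k$ must satisfy $i\in A\cap[s]$ (otherwise $A_{i,s+1}=A\in\mathcal{A}_k$); then for an arbitrary $j\in A\cap[s]$ I would check the identity $(A_{j,s+1})_{i,j}=A_{i,s+1}$ and invoke exchange stability on $[s]$ to transfer non-membership from $A_{i,s+1}$ to $A_{j,s+1}$. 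Part (3) is then immediate: for any $B'$ with $|B'|=|B|$ exchange stability yields $B'\cup C\in\mathcal{A}_k$, and the witnessing non-membership for $A$ transfers to $B'\cup C$ because the relevant exchanged sets again have equal size trace on $[s]$ and equal part on $[s+1,n]$; along the way one records that $A\cap[s]\neq\emptyset$ for every $A\in\mathcal{A}_k^{-s}$.

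For (4), writing $A_{i,s+1}=(A\setminus\{i\})\cup\{s+1\}$ when $i\in A$ (using $s+1\notin A$), the intersection size changes by $[s+1\in D]-[i\in D]$, so the only dangerous case is $i\in D$, $s+1\notin D$ with $|A\cap D|=t$. Here $D\notin\mathcal{A}_q^{-s}$ supplies $D_{i,s+1}\in\mathcal{A}_q$, and cross $t$-intersecting applied to $A$ and $D_{i,s+1}$ gives $|A\cap D|\ge t+1$, which restores $|A_{i,s+1}\cap D|\ge t$.

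The main obstacle is part (5), whose real content is the case $b_1+b_2<s+t$, where $b_i=|A_i\cap[s]|$. Writing $B_i=A_i\cap[s]$ and $C_i=A_i\cap[s+1,n]$, part (1) gives $C_i\subseteq[s+2,n]$. When $b_1+b_2>s+t$ one gets $|A_1\cap A_2|\ge|B_1\cap B_2|\ge b_1+b_2-s\ge t+1$ for free. For $b_1+b_2<s+t$ I would assume $|A_1\cap A_2|=t$ and split on $C_1\cap C_2$. If $C_1\cap C_2$ contains some $c$, then shifting $c$ down onto $s+1$ produces $(A_1\setminus\{c\})\cup\{s+1\}\in\mathcal{A}_k$, whose intersection with $A_2$ is $|A_1\cap A_2|-1=t-1$, contradicting cross $t$-intersecting; this ``pull an element of $C_1$ onto $s+1$'' move is the crucial new idea and the step I expect to be hardest to spot. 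If instead $C_1\cap C_2=\emptyset$, then $|B_1\cap B_2|=t$, while $b_1+b_2<s+t$ makes the minimal possible overlap $\max(0,b_1+b_2-s)$ strictly less than $t$; using (3) to realize this minimal overlap inside $\mathcal{A}_k$ and $\mathcal{A}_q$ produces two family members intersecting in fewer than $t$ elements, again a contradiction. Hence $|A_1\cap A_2|\ge t+1$ in both regimes.
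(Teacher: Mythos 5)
Your proposal is correct and takes essentially the same route as the paper's proof: parts (1)--(4) match the paper's arguments almost step for step, and part (5) rests on the same two key ingredients the paper uses, namely rearranging the traces on $[s]$ via exchange stability to force a common element of $A_1\cap A_2$ in $[s+2,n]$, and then pulling that element down onto $s+1$ by shiftedness to gain one extra intersection point. The only difference is organizational: you run part (5) as a contradiction with a case split on $C_1\cap C_2$, whereas the paper directly shows $A_1\cap A_2\cap[s+1,n]\neq\emptyset$ and then applies the pulling step, which is the same argument.
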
 
\begin{proof}
(1) For any $A \in \mathcal{A}_k^{-s}$, there exists some $i\in [s]$
such that $
A_{i,s+1} \notin \mathcal{A}_k$. This implies that $|A\cap \{i,s+1\}|=1$. If 
$A\cap \{i,s+1\}=\{s+1\}$, then $A_{i,s+1}=(A \backslash\{s+1\}) \cup\{i\} $. Note that $ \mathcal{A}_k$ is shifted.
Thus, we have $A_{i,s+1}\in  \mathcal{A}_k$, which is a contradiction. This proves that 
$A\cap \{i,s+1\}=\{i\}$ and $s+1\notin A$.

(2) For any $A \in \mathcal{A}_k^{-s}$, there exists some $i\in [s]$
such that $
A_{i,s+1} \notin \mathcal{A}_k$. By (1), we get $A\cap \{i,s+1\}=\{i\}$.
If $j=i$, then we are done. So  let $j\neq i$.
Suppose for the contradiction that $A_{j,s+1} \in \mathcal{A}_k$. 
Since the symmetric extent of $\mathcal{A}_k$ is at least $s$, we infer that $(A_{j,s+1})_{i,j}\in \mathcal{A}_k$.
As $A_{j,s+1}=(A \backslash\{j\}) \cup\{s+1\} $, we have
$(A_{j,s+1})_{i,j}=(A \backslash\{i\}) \cup\{s+1\}=A_{i,s+1}$. However, we have $ A_{i,s+1}\notin \mathcal{A}_k$, which leads to a contradiction. Thus, we conclude that 
$A_{j,s+1} \notin \mathcal{A}_k$.

(3) Since the symmetric extent of $\mathcal{A}_k$ is at least $s$, we know by definition that $\mathcal{A}_k$ is exchange stable on $[s]$. This leads to 
$B' \cup C \in \mathcal{A}_k$. Assume for the sake of contradiction that $B' \cup C\notin \mathcal{A}_k^{-s}$. Then $(B' \cup C)_{i,s+1} \in \mathcal{A}_k$ for all $i\in[s]$.
By (1), we get $s+1\notin C$.
We can choose $j'\in B'$ and $j\in B$ arbitrarily. Then we have
$(B' \cup C)_{j', s+1}=(B' \backslash\{j'\}) \cup C\cup\{s+1\}\in \mathcal{A}_k$ and 
$(B \cup C)_{j, s+1}=(B \backslash\{j\}) \cup C\cup\{s+1\}$. 
Observe that $(B \backslash\{j\}) \cup C\cup\{s+1\}$  can be  obtained from
 $(B' \backslash\{j'\}) \cup C\cup\{s+1\}$
 by applying exchange operations on $[s]$.
By the exchange stability, we derive that $(B \backslash\{j\}) \cup C\cup\{s+1\}\in \mathcal{A}_k$.
That is  ${ A_{j, s+1}\in \mathcal{A}_k}$.
However, (2) leads to $A_{j, s+1}\notin \mathcal{A}_k$, a contradiction.
Therefore, we conclude that $B' \cup C\in \mathcal{A}_k^{-s}$.

(4) By applying (1), we have $s + 1 \notin A$. We choose $i\in [s]$ arbitrarily.
If $i\notin A$, then $A_{i,s+1}=A $. Since $\mathcal{A}_k$ and $\mathcal{A}_q$ are cross $t$-intersecting, we get $ |A_{i,s+1} \cap D|=|A\cap D| \geq  t$.
Next, we assume that $i\in A$.
Then
$A_{i,s+1}=(A \backslash\{i\}) \cup\{s+1\}$. Observe that $i\notin D$ or $s+1\in D$ would imply $ |A_{i,s+1} \cap D|\geq  |A\cap D| \geq  t$.
So  let  $i\in D$ and $s+1\notin D$.
It follows from $D \in \mathcal{A}_q \setminus \mathcal{A}_q^{-s}$
that $D_{i,s+1}\in\mathcal{A}_q $. This leads to  $D_{i,s+1}=(D \backslash\{i\}) \cup\{s+1\}\in\mathcal{A}_q $. Invoking the fact that $\mathcal{A}_k$ and $\mathcal{A}_q$ are cross $t$-intersecting, we obtain $ |A_{i,s+1} \cap D|= |A\cap D_{i,s+1}| \geq  t$.

(5) If $|A_1\cap [s]|+|A_2\cap [s]|\geq  s+t+1$, then $|A_1\cap A_2|\geq  |A_1\cap A_2\cap [s]|\geq  |A_1\cap [s]|+|A_2\cap [s]|-s\geq  t+1$, as desired. Next, let $|A_1\cap [s]|+|A_2\cap [s]|\leq s+t-1$. For convenience, we denote $a:=|A_1\cap [s]|$ and $b:=|A_2\cap [s]|$.
Since $\mathcal{A}_k$  and $\mathcal{A}_q$ are both exchange stable on $[s]$, we infer that $A_1':=[a]\cup (A_1\cap[s+1, n])\in \mathcal{A}_k$
and  $A_2':=[s-b+1, s]\cup (A_2\cap[s+1, n])\in \mathcal{A}_q$.
Then $|A_1'\cap A_2'\cap [s]|=|[a]\cap [s-b+1,s]|\leq \text{max}\{0, a+b-s\}\leq t-1$. This implies that $|A_1'\cap A_2'\cap [s+1,n]|\geq  1$ because $\mathcal{A}_k$  and $\mathcal{A}_q$ are cross $t$-intersecting. 
Thus, we have $|A_1\cap A_2\cap [s+1,n]|\geq  1$. Using (1), we get $s+1\notin  A_1\cap A_2$. Consequently, there exists $x\in A_1\cap A_2\cap [s+2,n]$.
By the shiftedness, $(A_1\backslash\{x\}) \cup\{s+1\}\in \mathcal{A}_k$, which together with the cross $t$-intersecting property, yields
$|A_1\cap A_2|=|((A_1\backslash\{x\}) \cup\{s+1\})\cap A_2|+1\geq  t+1$, as needed. 
\end{proof}

Following the previous notation, we define $\mathcal{A}_k^{-s} = \{ A \in \mathcal{A}_k: A_{i,s+1} \notin \mathcal{A}_k \text{ for some } 1 \leq i \leq s \}
$ and we denote 
$\vec{\mathcal{A}}^{-s} = (\mathcal{A}_1^{-s}, \ldots, \mathcal{A}_m^{-s})$. 
For every $\mathcal{A}_k$, consider the following partition:
$$
\mathcal{A}_{k}^{-s}=\bigcup_{i=1}^{s}\mathcal{A}_{k}^{-s, i}, 
$$
where $\mathcal{A}_{k}^{-s,i}=\{A\in\mathcal{A}_{k}^{-s}: |A\cap[s]|=i \}$. 
Let $\mathcal{A}_{k,*}^{-s, i}=\{A\cap [s+2, n]: A\in \mathcal{A}_{k}^{-s,i}\}$.  Moreover, let
 $\vec{\mathcal{A}}^{-s,i} = (\mathcal{A}_1^{-s,i}, \ldots, \mathcal{A}_m^{-s,i})$ and $\vec{\mathcal{A}}_*^{-s,i} = (\mathcal{A}_{1,*}^{-s,i}, \ldots, \mathcal{A}_{m,*}^{-s,i})$.

To apply the pushing-pulling method, we need the following result.

\begin{lemma}\label{le32}
Let $\mathcal A_k\subseteq 2^{[n]}$ be a monotone
shifted family. Let $\ell$ and $s$ be the extent and the symmetric
extent of $\mathcal A_k$, respectively. Suppose that $\ell<n$ and
$\mathcal A_{k,*}^{-s,i}\ne\emptyset$. Then, for any $x\in[n]$
satisfying
$
x>\ell$ and $ x>s+1$,
we have
\[ \left|\mathcal{A}_{k,*}^{-s,i}[x] \right|= 
\left|\mathcal{A}_{k,*}^{-s,i}(\bar{x}) \right|. \]
\end{lemma}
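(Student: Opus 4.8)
The plan is to exhibit a fixed-point-free involution on $\mathcal{A}_{k,*}^{-s,i}$ that swaps the sets containing $n$ with those avoiding $n$; the natural candidate is the ``toggle'' map $\phi(F)=F\,\Delta\,\{n\}$. The whole argument hinges on one structural feature, which I would record first: because the extent satisfies $\ell<n$, the element $n$ lies in no generating set of $\mathcal{A}_k$, so membership in $\mathcal{A}_k$ is insensitive to $n$. Concretely, for any $A\subseteq[n]$ one has $A\in\mathcal{A}_k \iff A\,\Delta\,\{n\}\in\mathcal{A}_k$: if $n\notin A$, this follows from monotonicity in one direction, and in the other from the fact that any generating set $G\subseteq A\cup\{n\}$ must avoid $n$ (as $G\subseteq[\ell]$), whence $G\subseteq A$ and $A\in\mathcal{A}_k$; the case $n\in A$ is symmetric.

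The second step is to lift this $n$-symmetry from $\mathcal{A}_k$ to the derived family $\mathcal{A}_k^{-s,i}$. The key point is that the exchange operator $(\cdot)_{i,s+1}$ defining $\mathcal{A}_k^{-s}$ never touches coordinate $n$: since $s<n-1$ we have $s+1<n$, so $n\notin\{i,s+1\}$ for every $i\in[s]$, and hence toggling $n$ commutes with the operator, giving the identity $(A\,\Delta\,\{n\})_{i,s+1}=A_{i,s+1}\,\Delta\,\{n\}$. Combining this with the $n$-symmetry of $\mathcal{A}_k$ yields, for each $i\in[s]$, the equivalence $A_{i,s+1}\notin\mathcal{A}_k \iff (A\,\Delta\,\{n\})_{i,s+1}\notin\mathcal{A}_k$. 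Since toggling $n$ also leaves $|A\cap[s]|$ unchanged, I conclude $A\in\mathcal{A}_k^{-s,i}\iff A\,\Delta\,\{n\}\in\mathcal{A}_k^{-s,i}$; that is, $\mathcal{A}_k^{-s,i}$ is closed under the $n$-toggle.

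Finally I would push the toggle down to the starred family. Because $s<n-1$ forces $s+2\leqslant n$, the element $n$ belongs to $[s+2,n]$, so the restriction $A\mapsto A\cap[s+2,n]$ intertwines with the toggle: $(A\,\Delta\,\{n\})\cap[s+2,n]=(A\cap[s+2,n])\,\Delta\,\{n\}$. Hence if $C\in\mathcal{A}_{k,*}^{-s,i}$ is witnessed by some $A\in\mathcal{A}_k^{-s,i}$, then $A\,\Delta\,\{n\}\in\mathcal{A}_k^{-s,i}$ witnesses $C\,\Delta\,\{n\}\in\mathcal{A}_{k,*}^{-s,i}$, so $\phi(C)=C\,\Delta\,\{n\}$ maps $\mathcal{A}_{k,*}^{-s,i}$ into itself. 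This $\phi$ is visibly a fixed-point-free involution that exchanges $\mathcal{A}_{k,*}^{-s,i}[n]$ with $\mathcal{A}_{k,*}^{-s,i}(\bar{n})$, which yields the claimed equality of cardinalities.

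The main care point is the compatibility in the second step: one must check that the $n$-symmetry of $\mathcal{A}_k$ interacts cleanly with the exchange operator so that the defining condition of $\mathcal{A}_k^{-s}$ is genuinely preserved by the toggle. The hypotheses are used sharply and in complementary ways: $\ell<n$ makes $n$ \emph{invisible} to membership in $\mathcal{A}_k$, while $s<n-1$ makes $n$ both untouched by $(\cdot)_{i,s+1}$ and \emph{visible} to the restriction to $[s+2,n]$. The hypothesis $\mathcal{A}_{k,*}^{-s,i}\neq\emptyset$ is not actually needed for the equality (both sides would simply be $0$), but it fixes the relevant case within the pushing-pulling framework.
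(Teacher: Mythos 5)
Your proof is correct and follows essentially the same route as the paper: both establish the bijection $C\leftrightarrow C\,\Delta\,\{n\}$ between $\mathcal{A}_{k,*}^{-s,i}[n]$ and $\mathcal{A}_{k,*}^{-s,i}(\bar{n})$, using exactly the two facts that $\ell<n$ makes membership in $\mathcal{A}_k$ insensitive to $n$ (generating sets live in $[\ell]$, plus monotonicity) and that $s<n-1$ keeps $n$ out of the exchange operators $(\cdot)_{a,s+1}$. Your packaging — first proving $n$-toggle invariance of $\mathcal{A}_k$, then propagating it by commutation through $\mathcal{A}_k^{-s,i}$ and the restriction to $[s+2,n]$ — is a clean reorganization of the paper's two witness-level contradiction arguments, not a different method.
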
 

\begin{proof}
Since $x>s+1$, the coordinate $x$ belongs to $[s+2,n]$, while
$x>\ell$ guarantees that membership in $\mathcal A_k$ is unchanged
by adding or deleting $x$.
We are going to establish a bijection from $ \mathcal{A}_{k,*}^{-s,i}[x]$ to 
$\mathcal{A}_{k,*}^{-s,i}(\bar{x})$.
Let $\phi: \mathcal{A}_{k,*}^{-s,i}[x] \longrightarrow\mathcal{A}_{k,*}^{-s,i}(\bar{x})$ be a map defined as $\phi(C)=C\backslash\{x\}$, where $C\in \mathcal{A}_{k,*}^{-s,i}[x]$. 
It is sufficient to prove that $\phi$ is a bijection.  Consequently, we get $|\mathcal{A}_{k,*}^{-s,i}[x]|=|\mathcal{A}_{k,*}^{-s,i}(\bar{x})|$. 

To start with, we choose $C\in \mathcal{A}_{k,*}^{-s,i}[x]$  arbitrarily. Then there exists a set $A\in \mathcal{A}_k^{-s,i}$ such that $A\cap [s+2, n]=C$.
By Part (1) of Lemma \ref{le3}, we have $s+1\notin A$.
Since $\ell$ is the extent of  $\mathcal{A}_k$ and $\ell<x$, we have $A\backslash\{x\}\in \mathcal{A}_k$. We claim that $A\backslash\{x\}\in \mathcal{A}_k^{-s, i}$. It suffices to show that $A\backslash\{x\}\in \mathcal{A}_k^{-s}$.
Suppose on the contrary that $A\backslash\{x\}\in \mathcal{A}_k\backslash\mathcal{A}_k^{-s}$. Then for any $a\in A\cap [s]$, we have 
$(A\backslash\{a, x\})\cup\{s+1\} \in \mathcal{A}_k$. Since $\mathcal{A}_k$ is monotone, we infer that $(A\backslash\{a\})\cup\{s+1\}\in \mathcal{A}_k$. It follows that
$A_{a,s+1} \in \mathcal{A}_k$ for all $a\in [s]$, which a contradiction.
Now, we must have $A\backslash\{x\}\in \mathcal{A}_k^{-s, i}$, which implies that $C\backslash\{x\}\in \mathcal{A}_{k,*}^{-s, i}(\bar{x})$.

On the other hand, we choose $C'\in \mathcal{A}_{k,*}^{-s,i}(\bar{x})$  arbitrarily.
Then there exists some $A'\in \mathcal{A}_k^{-s,i}$ such that $A'\cap [s+2, n]=C'$. By Part (1) of Lemma \ref{le3}, we have $s+1\notin A'$. Since $\mathcal{A}_k$ is monotone, we have $A'\cup\{x\}\in \mathcal{A}_k$. We claim that $A'\cup\{x\}\in \mathcal{A}_k^{-s, i}$.
Suppose on the contrary that $A'\cup\{x\}\in \mathcal{A}_k\backslash\mathcal{A}_k^{-s}$. Then for any $b\in A'\cap [s]$, we have 
$(A'\backslash\{b\})\cup\{s+1,x\} \in \mathcal{A}_k$.
Since $\ell$ is the extent of  $\mathcal{A}_k$ and  $\ell<x$, we have $(A'\backslash\{b\})\cup\{s+1\} \in \mathcal{A}_k$.
Then
$A'_{b,s+1} \in \mathcal{A}_k$ for all $b\in [s]$, a contradiction.
Therefore, we have $A'\cup\{x\}\in \mathcal{A}_k^{-s, i}$. So we get 
$C'\cup\{x\}\in \mathcal{A}_{k,*}^{-s, i}[x]$. 
Thus, we conclude that $\phi$ is indeed a bijection from $ \mathcal{A}_{k,*}^{-s,i}[x]$ to 
$\mathcal{A}_{k,*}^{-s,i}(\bar{x})$.
\end{proof}

\begin{lemma}\label{le33}
Let $\vec{\mathcal{A}} = (\mathcal{A}_1, \ldots, \mathcal{A}_m) \in (2^{[n]})^{m}$ be  a non-empty  monotone shifted  sequence. Let $\ell$ and   $s$ be  the extent and the symmetric extent of $\vec{\mathcal{A}}$, respectively. Suppose that $s<\ell<n$ and $\vec{\mathcal{A}}_*^{-s,i}\neq(\emptyset, \ldots, \emptyset)$.  Then, for any $x\in [\ell+1,n]$,   $\|\vec{\mathcal{A}}_*^{-s,i}[x]\|=\|\vec{\mathcal{A}}_*^{-s,i}(\bar{x})\|$.
\end{lemma}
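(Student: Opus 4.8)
The plan is to reduce Lemma \ref{le33} to its single-family counterpart, Lemma \ref{le32}, applied coordinatewise. Since the $1$-norm is additive, I would first expand
$$\|\vec{\mathcal{A}}_*^{-s,i}[n]\|=\sum_{k=1}^m\left|\mathcal{A}_{k,*}^{-s,i}[n]\right| \quad\text{and}\quad \|\vec{\mathcal{A}}_*^{-s,i}(\bar{n})\|=\sum_{k=1}^m\left|\mathcal{A}_{k,*}^{-s,i}(\bar{n})\right|,$$
so that it suffices to prove $|\mathcal{A}_{k,*}^{-s,i}[n]|=|\mathcal{A}_{k,*}^{-s,i}(\bar{n})|$ for each fixed $k\in[m]$. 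For every coordinate $k$ with $\mathcal{A}_{k,*}^{-s,i}=\emptyset$, both of these families are empty subfamilies, so the equality $0=0$ is trivial; the content lies entirely in the coordinates with $\mathcal{A}_{k,*}^{-s,i}\neq\emptyset$.

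Fix such a $k$. The main point is to check that the hypotheses of Lemma \ref{le32} hold for the single family $\mathcal{A}_k$, the delicate issue being that $\ell$ and $s$ are defined at the level of the sequence $\vec{\mathcal{A}}$ rather than at the level of $\mathcal{A}_k$. Since $\mathcal{A}_{k,*}^{-s,i}\neq\emptyset$, we have $\mathcal{A}_k^{-s,i}\neq\emptyset$ and hence $\mathcal{A}_k^{-s}\neq\emptyset$. Let $s_k$ denote the symmetric extent of $\mathcal{A}_k$; because $s$ is the minimal symmetric extent over the sequence, $s_k\geqslant s$. I claim $s_k=s$. Indeed, if $s_k>s$, then $s+1\leqslant s_k$ and $\mathcal{A}_k$ is exchange stable on $[s_k]$, so $A_{j,s+1}\in\mathcal{A}_k$ for every $A\in\mathcal{A}_k$ and every $j\in[s]$, forcing $\mathcal{A}_k^{-s}=\emptyset$, a contradiction. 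Thus $s$ is exactly the symmetric extent of $\mathcal{A}_k$.

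It remains to record the two size hypotheses of Lemma \ref{le32}. The extent $\ell_k$ of $\mathcal{A}_k$ satisfies $\ell_k\leqslant\ell<n$, so $\ell_k<n$; and from $s<\ell\leqslant n-1$ we obtain $s\leqslant n-2<n-1$. Since $\vec{\mathcal{A}}$ is monotone and shifted, so is $\mathcal{A}_k$. All hypotheses of Lemma \ref{le32} are therefore met, and it yields $|\mathcal{A}_{k,*}^{-s,i}[n]|=|\mathcal{A}_{k,*}^{-s,i}(\bar{n})|$. Summing this identity over $k\in[m]$ gives $\|\vec{\mathcal{A}}_*^{-s,i}[n]\|=\|\vec{\mathcal{A}}_*^{-s,i}(\bar{n})\|$, as required. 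I expect the only genuinely nontrivial step to be the matching of the symmetric extents, namely the observation that any coordinate with $s_k>s$ has $\mathcal{A}_k^{-s}=\emptyset$ and hence contributes nothing; everything else is bookkeeping that transfers the single-family bijection of Lemma \ref{le32} through the additive $1$-norm.
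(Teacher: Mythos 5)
Your proof is correct and follows essentially the same route as the paper: reduce to the single-family Lemma \ref{le32} coordinatewise, observe that any coordinate with $\mathcal{A}_{k,*}^{-s,i}\neq\emptyset$ must have symmetric extent exactly $s$, and sum the resulting equalities via additivity of the $1$-norm. In fact you supply the justification (exchange stability on $[s_k]$ forcing $\mathcal{A}_k^{-s}=\emptyset$ when $s_k>s$) for a step the paper only states without proof.
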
 
\begin{proof}
Let $K=\{k: \mathcal{A}_{k,*}^{-s, i}\neq \emptyset, k\in[m]\}$. Then $K\neq \emptyset$ because $\vec{\mathcal{A}}_*^{-s,i}\neq(\emptyset, \ldots, \emptyset)$. 
For each $k\in K$, let $\ell_k$ be the extent of $\mathcal A_k$.
Then $\ell_k\le\ell$. 
Since $\mathcal A_{k,*}^{-s,i}\ne\emptyset$, we have
$\mathcal A_k^{-s}\ne\emptyset$. As the symmetric extent of
$\mathcal A_k$ is at least $s$, it is therefore exactly $s$.
Since $x\in[\ell+1,n]$ and $s<\ell$, we have
$x>\ell_k$ and $x>s+1$. Hence, Lemma \ref{le32} gives
\[
\bigl|\mathcal A_{k,*}^{-s,i}[x]\bigr|
=
\bigl|\mathcal A_{k,*}^{-s,i}(\bar x)\bigr|.
\]
Consequently, we get $\|\vec{\mathcal{A}}_*^{-s,i}[x]\|=\sum_{k\in K}|\mathcal{A}_{k, *}^{-s,i}[x]|=\sum_{k\in K}|\mathcal{A}_{k, *}^{-s,i}(\bar{x})|=\|\vec{\mathcal{A}}_*^{-s,i}(\bar{x})\|$.
\end{proof}

The following lemma is the key ingredient in our proof.

\begin{lemma}\label{le5}
Let $\vec{\mathcal{A}} = (\mathcal{A}_1, \ldots, \mathcal{A}_m) \in (2^{[n]})^{m}$ be  a non-empty monotone shifted pairwise cross $t$-intersecting  sequence. Let $\ell$ and   $s$ be  the extent and the symmetric extent of $\vec{\mathcal{A}}$, respectively. Assume that  $\mathcal{A}_{k}^{-s, i}=\emptyset$ for all $i\in [t-1]$ and $k\in [m]$. If $n>\ell>s$, $t>1$ and $2|(\ell+t)$,   then there exists a non-empty monotone pairwise cross $t$-intersecting sequence $\vec{\mathcal{B}}$  with  extent at most $\ell$ such that $\|\vec{\mathcal{B}}\|>\|\vec{\mathcal{A}}\|$.
\end{lemma}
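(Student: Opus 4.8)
The plan is to run one step of the pushing--pulling method: starting from $\vec{\mathcal{A}}$ whose symmetric extent $s$ is strictly below the extent $\ell$, I would construct a new sequence $\vec{\mathcal{B}}$ that is exchange stable on a longer initial segment (morally, of symmetric extent $s+1$), that keeps the extent equal to $\ell$, and that has strictly larger $1$-norm. First I would pin down the shape of the boundary. By hypothesis $\mathcal{A}_k^{-s,i}=\emptyset$ for every $i\in[t-1]$, so in the partition $\mathcal{A}_k^{-s}=\bigcup_i\mathcal{A}_k^{-s,i}$ only the pieces with $t\leqslant i\leqslant s$ survive. For $A\in\mathcal{A}_k^{-s,i}$, Part (1) of Lemma \ref{le3} gives $s+1\notin A$, so $A=B\cup C$ with $B=A\cap[s]$, $|B|=i$, and tail $C=A\cap[s+2,n]\in\mathcal{A}_{k,*}^{-s,i}$; Parts (2)--(3) of Lemma \ref{le3} then say that the whole orbit $\{B'\cup C:B'\in\binom{[s]}{i}\}$ lies in $\mathcal{A}_k^{-s,i}$, while none of the exchange images $A_{j,s+1}$ with $j\in B$ lie in $\mathcal{A}_k$. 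This is exactly the slack needed to bring $s+1$ into play.

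The construction I would use modifies only the boundary, pulling the element $s+1$ into the sets so that each surviving profile becomes exchange stable on $[s+1]$, and then takes the monotone closure. Since $n>\ell>s$, element $n$ is inessential (every minimal set lies in $[\ell]$), and the balance identity $\|\vec{\mathcal{A}}_*^{-s,i}[n]\|=\|\vec{\mathcal{A}}_*^{-s,i}(\bar n)\|$ furnished by Lemma \ref{le33} is precisely what lets me account for the tails containing $n$ against those avoiding $n$ when I rewrite the boundary. The upshot of this bookkeeping is that the $s+1$-carrying sets created for each tail $C\in\mathcal{A}_{k,*}^{-s,i}$ contribute a strictly positive net term, so that $\|\vec{\mathcal{B}}\|>\|\vec{\mathcal{A}}\|$; strictness is guaranteed because $s<n$ forces $\vec{\mathcal{A}}^{-s}\neq(\emptyset,\ldots,\emptyset)$, so at least one boundary piece is nonempty. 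The extent remains exactly $\ell$ because the parity condition $2\mid(\ell+t)$ makes the central generating size $(\ell+t)/2$ an integer, so the newly created minimal sets can be taken inside $[\ell]$ and still attain the element $\ell$; this is the feature that distinguishes the present even case from the odd case, where the extent is instead forced to drop.

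It then remains to verify that $\vec{\mathcal{B}}$ is still pairwise cross $t$-intersecting, and this is the step I expect to be the main obstacle. The harmless pairs are dispatched immediately: if a newly created set $A_{j,s+1}$ meets a non-boundary set $D\in\mathcal{A}_q\setminus\mathcal{A}_q^{-s}$, then Part (4) of Lemma \ref{le3} gives $|A_{j,s+1}\cap D|\geqslant t$ at once; and if two boundary sets have non-complementary profiles, then Part (5) of Lemma \ref{le3} already yields intersection at least $t+1$, which survives the deletion of a single element of $[s]$. The genuine difficulty is the \emph{tight} case $i_1+i_2=s+t$, where Part (5) of Lemma \ref{le3} only guarantees intersection exactly $t$. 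Here swapping an element of $[s]$ for $s+1$ in a profile-$i_1$ set lowers its guaranteed overlap with a profile-$i_2$ set by one, and because exchange stability makes all choices of the $[s]$-part available while the tails may be disjoint, a naive symmetrization would create a pair meeting in only $t-1$ elements.

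The hard part will therefore be to carry out the pull asymmetrically across each complementary pair $\{i,\,s+t-i\}$ — genuinely \emph{pushing} one profile while \emph{pulling} the other — so that the modified boundary never activates both members of a tight pair in a way that costs an intersection element, and so that the local size exchange still comes out positive. I would organize this by using the exchange stability of Lemma \ref{le3} to reduce every pair to canonical representatives $[a]\cup(\cdot)$ and $[s-b+1,s]\cup(\cdot)$, exactly as in the proof of Part (5), and then using the tail balance of Lemma \ref{le33} to pair the pieces with and without $n$ so that the push on the lower profile and the pull on the upper profile cancel on the count of removed sets and leave a strict surplus. Confirming that this simultaneous push--pull is internally consistent across all families $k\in[m]$, preserves monotonicity, and never drops an intersection below $t$, is the core of the argument; once it is established, the extent and the strict increase in $1$-norm follow as described above.
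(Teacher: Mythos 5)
Your proposal follows the paper's overall strategy---partition the boundary $\vec{\mathcal{A}}^{-s}$ into profiles $i=|A\cap[s]|\geqslant t$, dispatch the easy intersection checks with Parts (4)--(5) of Lemma \ref{le3}, and use Lemma \ref{le33} for tail bookkeeping---and your diagnosis of the obstruction (tight pairs $i_1+i_2=s+t$) is exactly right. But the proposal stops where the proof actually begins: you declare that carrying out the asymmetric push--pull ``is the core of the argument'' and never give the construction, so this is a genuine gap rather than a proof. The paper closes that gap with two concrete devices absent from your sketch. For a non-central profile $a\neq s+t-a$ it forms \emph{two} candidates, $\vec{\mathcal{A}}'=\vec{\mathcal{A}}-\vec{\mathcal{A}}^{-s,a}+\vec{\mathcal{B}}(s,s+t-a)$ and $\vec{\mathcal{A}}''=\vec{\mathcal{A}}-\vec{\mathcal{A}}^{-s,s+t-a}+\vec{\mathcal{B}}(s,a)$, where $\vec{\mathcal{B}}(s,i)$ consists of the sets with $[s]$-part of size $i-1$, containing $s+1$, and tail in $\vec{\mathcal{A}}_*^{-s,i}$: cross $t$-intersection holds because the unique clashing profile is deleted wholesale, and at least one candidate strictly gains weight, since otherwise $\binom{s}{a}\binom{s}{s+t-a}\geqslant\binom{s}{a-1}\binom{s}{s+t-a-1}$, i.e.\ $a(s+t-a)\leqslant(s-a+1)(a+1-t)$, which fails by exactly $(s+1)(t-1)>0$ when $t>1$. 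Note that a single fixed choice of which profile to push and which to pull can \emph{lose} weight; it is this multiplicative two-candidate argument that saves the case, and Lemma \ref{le33} and the element $n$ play no role here, contrary to what your sketch suggests. For the self-complementary profile $a=\frac{s+t}{2}$ (after reducing to the case where all other boundary profiles vanish), the paper removes only $\vec{\mathcal{A}}^{-s,\frac{s+t}{2}}(\bar n)$ and adds only $\vec{\mathcal{B}}(s,\frac{s+t}{2})[n]$; the key point, which your proposal never states, is that then every surviving member of a tight pair contains $n$, and this shared element restores the intersection: $|B_1\cap A_2|\geqslant\bigl(\tfrac{s+t}{2}-1\bigr)+\tfrac{s+t}{2}-s+1=t$. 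Lemma \ref{le33} enters only afterwards, for counting, via $\bigl\|\vec{\mathcal{A}}_*^{-s,\frac{s+t}{2}}[n]\bigr\|=\bigl\|\vec{\mathcal{A}}_*^{-s,\frac{s+t}{2}}(\bar n)\bigr\|$ together with $\binom{s}{\frac{s+t}{2}-1}>\binom{s}{\frac{s+t}{2}}$ for $t>1$.

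A second concrete error: your justification of the extent claim---that parity makes $(\ell+t)/2$ an integer ``so the newly created minimal sets can be taken inside $[\ell]$''---does not match the construction and cannot be made to: the new sets consist of an $[s]$-part, the element $s+1$, and a tail in $[s+2,n]$ that may contain elements beyond $\ell$ (in the central case they are even required to contain $n$). In the paper the hypothesis $2\mid(\ell+t)$ is used for something else entirely: the central case forces $2\mid(s+t)$, so $s\equiv\ell\pmod 2$ and $s<\ell$ give $s\leqslant\ell-2$, which is precisely the room ($s<n-1$ and $\ell<n$) needed to invoke Lemmas \ref{le32} and \ref{le33}. Both the intersection-preservation step and the parity/extent step therefore need to be redone along the paper's lines before your outline becomes a proof.
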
 
\begin{proof}
Since $s<n$, by the definition of $s$, we have 
$\vec{\mathcal{A}}^{-s} = (\mathcal{A}_1^{-s}, \ldots, \mathcal{A}_m^{-s})\neq (\emptyset,\ldots, \emptyset).$ Since  $\vec{\mathcal{A}}$ is monotone, we infer that $[n]\in \mathcal{A}_k$ for all $k\in [m]$. Clearly, we have $[n]\notin \mathcal{A}_k^{-s}$.
It follows that $\mathcal{A}_k\backslash \mathcal{A}_k^{-s}\neq \emptyset$ for all $k\in [m]$.

 Since $\mathcal{A}_{k}^{-s, i}=\emptyset$ for all $i\in [t-1]$ and $k\in [m]$, and $\vec{\mathcal{A}}^{-s} = (\mathcal{A}_1^{-s}, \ldots, \mathcal{A}_m^{-s})\neq (\emptyset,\ldots, \emptyset)$, 
 there exists some $a\in [t,s]$ such that 
$\vec{\mathcal{A}}^{-s,a} = (\mathcal{A}_1^{-s,a}, \ldots, \mathcal{A}_m^{-s,a})\neq (\emptyset,\ldots, \emptyset)$.
By Part (3) of Lemma \ref{le3}, we know that if $\mathcal{A}_{k}^{-s, a}\neq \emptyset$, then
$|\mathcal{A}_{k}^{-s, a}|=\binom{s}{a}|\mathcal{A}_{k,*}^{-s, a}|$. 
Thus, we have
$\|\vec{\mathcal{A}}^{-s,a}\|=\binom{s}{a}\|\vec{\mathcal{A}}_*^{-s,a}\|$.

For every $k\in [m]$ and $i\in [t,s]$, we define
$$
\mathcal{B}(k,s, i)=
 \begin{cases} \{B: B\cap [s]=i-1, s+1\in B, B\cap [s+2,n]\in \mathcal{A}_{k,*}^{-s, i} \} & \text { if }  \mathcal{A}_{k}^{-s, i}\neq \emptyset; \\ \emptyset & \text { otherwise. }\end{cases}
$$
Let $\vec{\mathcal{B}}(s,i) = (\mathcal{B}(1,s, i), \ldots, \mathcal{B}(m,s, i))$.
It follows from Part (2) of Lemma \ref{le3} 
that $\mathcal{B}(k,s, i)\cap \mathcal{A}_{k}=\emptyset$ for all $k\in[m]$ if $\mathcal{B}(k,s, i)\neq \emptyset$. Moreover, we have that $\vec{\mathcal{B}}(s,a)\neq (\emptyset,\ldots, \emptyset)$ and $\|\vec{\mathcal{B}}(s,a)\|=\binom{s}{a-1}\|\vec{\mathcal{A}}_*^{-s,a}\|$ since $\vec{\mathcal{A}}^{-s,a} \neq (\emptyset,\ldots, \emptyset)$. 

We have the following two cases $a\neq s+t-a$; or $a=\frac{s+t}{2}$.

{\bf Case 1.}~For the case $a\neq s+t-a$, let 
\begin{align*}
 \vec{\mathcal{A}}' = \vec{\mathcal{A}}- \vec{\mathcal{A}}^{-s,a}+\vec{\mathcal{B}}(s,s+t-a)
\end{align*}
and 
\[ \vec{\mathcal{A}}'' =\vec{\mathcal{A}}- \vec{\mathcal{A}}^{-s,s+t-a}+\vec{\mathcal{B}}(s,a). \]
Recall that $\mathcal{A}_k\backslash \mathcal{A}_k^{-s}\neq \emptyset$ for all $k\in [m]$.

Using Parts (4) and (5) of Lemma \ref{le3}, we infer that both $ \vec{\mathcal{A}}'$  and $ \vec{\mathcal{A}}''$ are  
non-empty pairwise cross $t$-intersecting sequences.
Since $\vec{\mathcal{B}}(s,a)\neq (\emptyset,\ldots, \emptyset)$, we have $\|\vec{\mathcal{B}}(s,a)\|>0$. Recall that $\mathcal{B}(k,s, i)\cap \mathcal{A}_{k}=\emptyset$ for all $k\in[m]$ and $\mathcal{B}(k,s, i)\neq \emptyset$.

If $\vec{\mathcal{A}}^{-s,s+t-a}= (\emptyset,\ldots, \emptyset)$, then $\|\vec{\mathcal{A}}''\|=\|\vec{\mathcal{A}}\|+\|\vec{\mathcal{B}}(s,a)\|>\|\vec{\mathcal{A}}\|$. Setting $\vec{\mathcal{B}}=\vec{\mathcal{A}}''$, we get the { desired result.}   
If $\vec{\mathcal{A}}^{-s,s+t-a}\neq (\emptyset,\ldots, \emptyset)$, then $\|\vec{\mathcal{A}}^{-s,s+t-a}\|
=\binom{s}{s+t-a}\|\vec{\mathcal{A}}_*^{-s,s+t-a}\|$.
Moreover, we have $\vec{\mathcal{B}}(s,s+t-a)\neq (\emptyset,\ldots, \emptyset)$ and $\|\vec{\mathcal{B}}(s,s+t-a)\|=\binom{s}{s+t-a-1}\|\vec{\mathcal{A}}_*^{-s,s+t-a}\|$.
\begin{claim}\label{cl1}
$\max \left\{\|\vec{\mathcal{A}}'\|, \|\vec{\mathcal{A}}''\|\right\}>\|\vec{\mathcal{A}}\|$.
\end{claim}

\begin{proof}[Proof of Claim \ref{cl1}]
Suppose on the contrary that $\|\vec{\mathcal{A}}'\|\leq\|\vec{\mathcal{A}}\|$
and
$\|\vec{\mathcal{A}}''\|\leq\|\vec{\mathcal{A}}\|$.
It follows that $\|\vec{\mathcal{A}}^{-s,a}\|\geq  \|\vec{\mathcal{B}}(s,s+t-a)\|$ and $\|\vec{\mathcal{A}}^{-s,s+t-a}\|\geq  \|\vec{\mathcal{B}}(s,a)\|$.
 Consequently, we get 
\begin{align*}
 \binom{s}{a}\|\vec{\mathcal{A}}_*^{-s,a}\| 
 \geq  \binom{s}{s+t-a-1}\|\vec{\mathcal{A}}_*^{-s,s+t-a}\|>0
\end{align*}
and 
\[ \binom{s}{s+t-a}\|\vec{\mathcal{A}}_*^{-s,s+t-a}\| \geq  \binom{s}{a-1}\|\vec{\mathcal{A}}_*^{-s,a}\|>0. \]
Multiplying these two inequalities, we obtain $\binom{s}{a}\binom{s}{s+t-a}\geq  \binom{s}{s+t-a-1} \binom{s}{a-1}$. This implies that $a(s+t-a)\leq (s-a+1)(a+1-t)$, { yielding a contradiction since $t>1$.} 
\end{proof} 
In what follows, we denote  
$$
\vec{\mathcal{B}}= \begin{cases} \langle\vec{\mathcal{A}}' \rangle& \text { if } \|\vec{\mathcal{A}}'\|\geq \|\vec{\mathcal{A}}''\|;\\ 
\langle\vec{\mathcal{A}}''\rangle & \text { otherwise. }\end{cases}
$$
It remains to check the extent. For every $y>\ell$, membership in each
$\mathcal A_k$ is unchanged by adding or deleting $y$. Since the
definition of $\mathcal A_k^{-s,i}$ only uses exchanges between points
of $[s]$ and $s+1$, the same is true of
$\mathcal A_k^{-s,i}$ and $\mathcal B(k,s,i)$. Thus,  both $\langle\vec{\mathcal A}'\rangle$ and
$\langle\vec{\mathcal A}''\rangle$ are independent of every coordinate
$y>\ell$, so no generating set of the chosen sequence contains such a
coordinate. Hence, $\vec{\mathcal B}$ has extent at most $\ell$, and the
result follows from  Claim \ref{cl1}.

{\bf Case 2.}~For the case $a=\frac{s+t}{2}$, by the previous case, we may assume that 
$\vec{\mathcal{A}}^{-s,j} =(\emptyset,\ldots, \emptyset)$ for all $j\in [t,s]$ and $j\neq s+t-j$.
Since $s<\ell$ and $2|(\ell+t)$, we have $s\leq \ell-2< n-2$.
From $\vec{\mathcal{A}}^{-s,\frac{s+t}{2}} \neq(\emptyset,\ldots, \emptyset)$, we know that $\vec{\mathcal{A}}_*^{\,-s,\frac{s+t}{2}}=\Big(\mathcal{A}_{1,*}^{-s,\frac{s+t}{2}}, \ldots, \mathcal{A}_{m,*}^{-s,\frac{s+t}{2}}\Big) \neq(\emptyset,\ldots, \emptyset)$. 

Recall that for a family $\mathcal{A}\subseteq 2^{[n]}$ and $x\in[\ell+1, n]$, we denote $\mathcal{A}[x]=\{A\in \mathcal{A}: x\in A\}$ and $\mathcal{A}(\bar{x})=\{A\in \mathcal{A}: x\notin A\}$.
Consider the sequences
$\vec{\mathcal{A}}^{-s,\frac{s+t}{2}}(\bar{x}) = \Big(\mathcal{A}_1^{-s,\frac{s+t}{2}}(\bar{x}), \ldots, \mathcal{A}_m^{-s,\frac{s+t}{2}}(\bar{x})\Big)$ and  $\vec{\mathcal{B}}(s,\frac{s+t}{2})[x] = \left(\mathcal{B}(1,s, \frac{s+t}{2})[x], \ldots, \mathcal{B}(m,s, \frac{s+t}{2})[x]\right)$.
Let 
\begin{align*}
 \vec{\mathcal{A}}'''&= \vec{\mathcal{A}}- \vec{\mathcal{A}}^{-s,\tfrac{s+t}{2}}(\bar{x})+\vec{\mathcal{B}}(s,\tfrac{s+t}{2})[x].
\end{align*}
 { For any $A\in \mathcal{A}_i^{-s,\frac{s+t}{2}}[x]$
and $B\in  \mathcal{B}(j,s, \frac{s+t}{2})[x]$ with $i\neq j$, we have $|A\cap [s]|= \frac{s+t}{2}$ and $|B\cap [s]|=\frac{s+t}{2}-1$ and $x\in A\cap B$.
Consequently, $|A\cap B|\geq |A\cap B\cap [s]|+1\geq |A\cap [s]|+|B\cap [s]|-s+1=t$.}
Combining this with Parts (4) and (5) of Lemma \ref{le3},  we obtain that 
$\vec{\mathcal{A}}'''$ is  a pairwise cross $t$-intersecting sequence.
Since $\mathcal{A}_k\backslash \mathcal{A}_k^{-s}\neq \emptyset$ for all $k\in [m]$, we know that $\vec{\mathcal{A}}'''$ is  non-empty.
\begin{claim}\label{cl2}
$ \|\vec{\mathcal{A}}'''\|>\|\vec{\mathcal{A}}\|$.
\end{claim}

\begin{proof}[Proof of Claim \ref{cl2}]
Suppose on the contrary that $\|\vec{\mathcal{A}}'''\|\leq\|\vec{\mathcal{A}}\|$.
Observe that 
\begin{align*}
\left\|\vec{\mathcal{A}}^{-s,\frac{s+t}{2}}(\bar{x}) \right\|= \binom{s}{\frac{s+t}{2}} 
\left\|\vec{\mathcal{A}}_*^{-s,\frac{s+t}{2}}(\bar{x})\right\|
\end{align*}
and 
\[ \left\|\vec{\mathcal{B}}(s,\tfrac{s+t}{2})[x] \right\| =\binom{s}{\frac{s+t}{2}-1} 
\left\|\vec{\mathcal{A}}_*^{-s,\frac{s+t}{2}}[x]\right\|. \]
Moreover, Lemma \ref{le33} implies $\Big\|\vec{\mathcal{A}}_*^{\, -s,\frac{s+t}{2}}(\bar{x}) \Big\|= \Big\|\vec{\mathcal{A}}_*^{\, -s,\frac{s+t}{2}}[x] \Big\|$. Then $\Big\|\vec{\mathcal{A}}_*^{\, -s,\frac{s+t}{2}}(\bar{x}) \Big\|>0$ because $\vec{\mathcal{A}}_*^{\, -s,\frac{s+t}{2}}=\Big(\mathcal{A}_{1,*}^{-s,\frac{s+t}{2}}, \ldots, \mathcal{A}_{m,*}^{-s,\frac{s+t}{2}}\Big) \neq(\emptyset,\ldots, \emptyset)$. 
Recall that $\mathcal{B}(k,s, \frac{s+t}{2})\cap \mathcal{A}_{k}=\emptyset$ for all $k\in[m]$ and $\mathcal{B}(k,s, \frac{s+t}{2})\neq \emptyset$.
Consequently, we get 
$$\|\vec{\mathcal{A}}'''\| 
=\|\vec{\mathcal{A}}\|-\binom{s}{\frac{s+t}{2}} \left\|\vec{\mathcal{A}}_*^{\,-s,\frac{s+t}{2}}(\bar{x}) \right\|+\binom{s}{\frac{s+t}{2}-1} \left\|\vec{\mathcal{A}}_*^{-s,\frac{s+t}{2}}(\bar{x})\right\| 
\leq \|\vec{\mathcal{A}} \|.$$
Therefore, we have $\binom{s}{\frac{s+t}{2}-1}\leq \binom{s}{\frac{s+t}{2}}$, which implies $t\leq 1$, yielding a contradiction.
\end{proof}

It remains to control the extent. Take $x=\ell+1$ in Claim 2.
Let $\vec{\mathcal F}$ be the shifted
componentwise up-set of the resulting sequence $\vec{\mathcal{A}}'''$.
 Then
$\|\vec{\mathcal F}\|>\|\vec{\mathcal A}\|$ and the only possible new
generating coordinate is $\ell+1$. 
Suppose that $\vec{\mathcal F}$ has extent $\ell+1$, and let
$\vec{\mathcal G}$ be its generating sequence. We have
$
  \vec{\mathcal G}[\ell+1]^{(t)}
  =(\emptyset,\ldots,\emptyset).
$
Indeed, no original generating set contains $\ell+1$, while every newly added
set contains $\frac{s+t}{2}-1$ elements of $[s]$ together with both $s+1$ and $\ell+1$,
and hence has size at least $\frac{s+t}{2}+1\ge t+1$. 
Since $\ell+1+t$ is odd,  applying Lemma \ref{le22} (2) removes $\ell+1$ without decreasing the norm. 
Thus, we obtain a non-empty monotone pairwise cross $t$-intersecting sequence
$\vec{\mathcal B}$ with extent at most $\ell$ and
$
  \|\vec{\mathcal B}\|\geq \|\vec{\mathcal F}\|>\|\vec{\mathcal A}\|,
$
as desired.
\end{proof}

\begin{lemma}\label{le34}
Let $\vec{\mathcal{A}} = (\mathcal{A}_1, \ldots, \mathcal{A}_m) \in (2^{[n]})^{m}$ be  a non-empty monotone shifted pairwise cross $t$-intersecting  sequence with $\|\vec{\mathcal{A}}\|$ maximal. Let   $s$ be  the  symmetric extent of $\vec{\mathcal{A}}$. Then $\mathcal{A}_{k}^{-s, i}=\emptyset$ for all $i\in [t-1]$ and $k\in [m]$.
\end{lemma}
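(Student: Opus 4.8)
The plan is to argue by contradiction: assuming there exist $k\in[m]$ and $i\in[t-1]$ with $\mathcal{A}_k^{-s,i}\neq\emptyset$, I would construct from $\vec{\mathcal{A}}$ a non-empty pairwise cross $t$-intersecting sequence of strictly larger $1$-norm, contradicting the maximality of $\|\vec{\mathcal{A}}\|$. First I would dispose of the trivial situation $s=n$: then every $\mathcal{A}_k^{-s}$ is empty by definition, so the conclusion holds vacuously. Hence the standing hypothesis $\mathcal{A}_k^{-s,i}\neq\emptyset$ forces $s<n$, and in particular $\mathcal{A}_k^{-s}\neq\emptyset$.

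The key idea is to perform the pulling operation on a \emph{single} family. Fix such $k$ and $i$, recall the family
$$\mathcal{B}(k,s,i)=\{B\subseteq[n]:|B\cap[s]|=i-1,\ s+1\in B,\ B\cap[s+2,n]\in\mathcal{A}_{k,*}^{-s,i}\}$$
from the proof of Lemma \ref{le5}, and set $\vec{\mathcal{A}}'=(\mathcal{A}_1,\ldots,\mathcal{A}_{k-1},\ \mathcal{A}_k\cup\mathcal{B}(k,s,i),\ \mathcal{A}_{k+1},\ldots,\mathcal{A}_m)$, altering only the $k$-th coordinate. Every $B\in\mathcal{B}(k,s,i)$ is of the form $A_{j,s+1}=(A\setminus\{j\})\cup\{s+1\}$ for some $A\in\mathcal{A}_k^{-s,i}$ and $j\in A\cap[s]$: writing $B=B'\cup\{s+1\}\cup C$ with $B'\subseteq[s]$, $|B'|=i-1$ and $C\in\mathcal{A}_{k,*}^{-s,i}$, Part (3) of Lemma \ref{le3} gives $(B'\cup\{x\})\cup C\in\mathcal{A}_k^{-s,i}$ for any $x\in[s]\setminus B'$, and $B=\big((B'\cup\{x\})\cup C\big)_{x,s+1}$.

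Because only $\mathcal{A}_k$ was altered, and cross $t$-intersection is a condition between \emph{distinct} families, the only new inequalities to verify are $|B\cap D|\geqslant t$ for $B\in\mathcal{B}(k,s,i)$ and $D\in\mathcal{A}_q$ with $q\neq k$; there is never a constraint between two newly added sets, which is exactly what makes the single-family modification work. Writing $B=A_{j,s+1}$, I split on $D$: if $D\in\mathcal{A}_q\setminus\mathcal{A}_q^{-s}$, then Part (4) of Lemma \ref{le3} gives $|A_{j,s+1}\cap D|\geqslant t$ directly; if $D\in\mathcal{A}_q^{-s}$, then $s+1\notin D$ by Part (1), so $|A_{j,s+1}\cap D|\geqslant|A\cap D|-1$, and here the hypothesis $i\leqslant t-1$ is decisive, since $|A\cap[s]|+|D\cap[s]|=i+|D\cap[s]|\leqslant(t-1)+s<s+t$, so Part (5) of Lemma \ref{le3} applies and yields $|A\cap D|\geqslant t+1$, whence $|A_{j,s+1}\cap D|\geqslant t$. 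Thus $\vec{\mathcal{A}}'$ is pairwise cross $t$-intersecting, and it is non-empty as it contains $\vec{\mathcal{A}}$.

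Finally I would compare sizes. By Part (2) of Lemma \ref{le3} none of the sets $A_{j,s+1}$ lies in $\mathcal{A}_k$, so $\mathcal{B}(k,s,i)\cap\mathcal{A}_k=\emptyset$, while $\mathcal{B}(k,s,i)\neq\emptyset$ because $\mathcal{A}_{k,*}^{-s,i}\neq\emptyset$ and $\binom{s}{i-1}\geqslant 1$. Hence $\|\vec{\mathcal{A}}'\|=\|\vec{\mathcal{A}}\|+|\mathcal{B}(k,s,i)|>\|\vec{\mathcal{A}}\|$, contradicting maximality (if one prefers to stay within monotone shifted sequences, one may first replace $\vec{\mathcal{A}}'$ by its up-set and then shift it, which by monotone closure and Lemma \ref{G23} preserve the cross $t$-intersecting property and do not decrease the norm). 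The one real subtlety, and the point I would be most careful about, is the reduction to modifying a single family: this removes any need to control intersections between two pulled sets (which would demand slack on both coordinates and could fail for small $s$), after which the bound $i\leqslant t-1$ feeds Part (5) exactly the strict inequality $i+|D\cap[s]|<s+t$ needed to recover the single unit of intersection lost in passing from $A$ to $A_{j,s+1}$.
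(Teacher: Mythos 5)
Your argument is correct: the representation of each $B\in\mathcal{B}(k,s,i)$ as an exchange $A_{j,s+1}$ with $A\in\mathcal{A}_k^{-s,i}$ (via Part (3) of Lemma \ref{le3}), the split into $D\in\mathcal{A}_q\setminus\mathcal{A}_q^{-s}$ (handled by Part (4)) and $D\in\mathcal{A}_q^{-s}$ (where $i\leqslant t-1$ gives $i+|D\cap[s]|<s+t$, so Part (5) yields $|A\cap D|\geqslant t+1$ and the loss of the element $j$ is affordable), and the strict gain in norm via Part (2) all check out, as does the observation that modifying only the $k$-th family creates no constraints among the newly added sets.

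The paper proves the lemma by a dual, leaner argument that never invokes Parts (3)--(5). It takes a single $A\in\mathcal{A}_k^{-s,i}$ and a single $a\in A\cap[s]$, notes $A_{a,s+1}\notin\mathcal{A}_k$ by Part (2), and uses maximality in the contrapositive direction: adding just this one set must destroy the cross $t$-intersecting property, so some $B\in\mathcal{A}_q$ ($q\neq k$) satisfies $|A_{a,s+1}\cap B|<t$. Since $|A\cap B|\geqslant t$, this forces $a\in B$, $s+1\notin B$ and $|A\cap B|=t$; then $|A\cap[s]|=i<t$ together with $s+1\notin A$ produces an element $x\in A\cap B\cap[s+2,n]$, and shiftedness of $\mathcal{A}_q$ turns $B$ into $(B\setminus\{x\})\cup\{s+1\}\in\mathcal{A}_q$, which meets $A$ in only $t-1$ elements --- contradicting that $\vec{\mathcal{A}}$ itself is pairwise cross $t$-intersecting. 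In effect the paper inlines a special case of the proof of Part (5) rather than citing it, and it needs only one exchanged set rather than the whole pulled family $\mathcal{B}(k,s,i)$. Your version buys modularity: it reuses the already-proved parts of Lemma \ref{le3} and runs structurally parallel to the proof of Lemma \ref{le5}, so no new intersection argument has to be made from scratch. The paper's version buys brevity and independence from the $\mathcal{B}(k,s,i)$ construction. Both hinge on exactly the same slack: $|A\cap[s]|\leqslant t-1$ guarantees an intersection element beyond $s+1$ that shiftedness can relocate there.
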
 
\begin{proof}
If  $\mathcal{A}_{k}^{-s, i}\neq\emptyset$ for some $i\in [t-1]$ and $k\in [m]$, then there exists $A\in\mathcal{A}_{k}^{-s}$ such that $ |A\cap[s]|=i $. By Part (2) of Lemma \ref{le3}, we obtain that  $A_{a,s+1} \notin \mathcal{A}_k$ for any  $a \in A\cap [s]$.
 By the maximality of 
$\|\vec{\mathcal{A}}\|$, there exists $B\in A_{q}$ for some $q\neq k$ such that $|A_{a,s+1}\cap B|<t$. Since $A_{a,s+1}=(A \backslash\{a\}) \cup\{s+1\}$, it follows from  $|A\cap B|\geq  t$ that
$B\cap \{a,s+1\}=\{a\}$ and $|A\cap B|= t$.
This, together with $ |A\cap[s]|=i<t $ and $s+1\notin A\cap B$, implies that there exists $x\in A\cap B\cap [s+2, n]$. By the shiftedness, we have $(B \backslash\{x\}) \cup\{s+1\}\in  A_{q}$.
Therefore, we get $|((B\backslash\{x\}) \cup\{s+1\})\cap A|=|A\cap B|-1= t-1$, which contradicts the pairwise cross $t$-intersecting property of 
$\vec{\mathcal{A}}$.
\end{proof}

\section{Proof of Theorem \ref{ma1}} \label{se3}

Throughout the proof, we denote 
$$ \mathcal{R}(n, \ell)  =\left\{R \subseteq [n]:|R \cap[\ell]| \geq  t\right\} $$ 
and 
$$ \mathcal{S}(n,\ell)  =\left\{S \subseteq [n]: [\ell] \subseteq S\right\}.$$
Clearly, we have $|\mathcal{R}(n, \ell)|=2^{n-\ell}\sum_{k=t}^{\ell}\binom{\ell}{k}$ and $|\mathcal{S}(n,\ell)|=2^{n-\ell}$.

\begin{lemma}\label{le4}
Let $\ell\in [t,n]$  and $f(\ell)= \left(\sum_{k=t}^{\ell}\binom{\ell}{k}  + m - 1\right)2^{n-\ell}$. Then 
$$f(\ell)\leq\max\{f(t), f(n)\}.$$
\end{lemma}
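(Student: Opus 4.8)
The plan is to show that the integer sequence $f(t), f(t+1), \ldots, f(n)$ is valley-shaped, meaning it first (weakly) decreases and then (weakly) increases, so that its maximum over $[t,n]$ must be attained at one of the two endpoints. The entire argument reduces to computing the forward difference $f(\ell+1)-f(\ell)$ and pinning down its sign.

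First I would record a recurrence for the partial binomial sum $S(\ell):=\sum_{k=t}^{\ell}\binom{\ell}{k}$. Applying Pascal's identity $\binom{\ell+1}{k}=\binom{\ell}{k}+\binom{\ell}{k-1}$ termwise, splitting the sum, and reindexing the second piece (together with $\binom{\ell}{\ell+1}=0$), one obtains
\[
S(\ell+1)=2S(\ell)+\binom{\ell}{t-1}.
\]
Substituting this into $f(\ell+1)=\bigl(S(\ell+1)+m-1\bigr)2^{n-\ell-1}$ and subtracting $f(\ell)=\bigl(S(\ell)+m-1\bigr)2^{n-\ell}$, the factor $2^{n-\ell-1}$ can be pulled out and all the $S(\ell)$ terms cancel, leaving the clean expression
\[
f(\ell+1)-f(\ell)=2^{n-\ell-1}\left(\binom{\ell}{t-1}-(m-1)\right).
\]

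The key structural observation is then immediate. Since $\binom{\ell}{t-1}$ is nondecreasing in $\ell$ (indeed $\binom{\ell+1}{t-1}=\binom{\ell}{t-1}+\binom{\ell}{t-2}\geqslant\binom{\ell}{t-1}$), the sign of $f(\ell+1)-f(\ell)$, which equals the sign of $\binom{\ell}{t-1}-(m-1)$, is nondecreasing in $\ell$. Hence there is a threshold index below which the forward difference is negative and at or above which it is nonnegative; the difference changes sign at most once, and only from negative to nonnegative. This forces $f$ to decrease up to some interior index and then increase, so its maximum over the integer interval $[t,n]$ is attained at $\ell=t$ or at $\ell=n$, which is exactly the claim $f(\ell)\leqslant\max\{f(t),f(n)\}$. (The degenerate cases, such as $t=1$ where $\binom{\ell}{t-1}\equiv 1$ and $f$ is simply monotone for $m\geqslant 2$, are subsumed by this valley-shape conclusion.)

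I expect the only step requiring genuine care to be the derivation of the recurrence $S(\ell+1)=2S(\ell)+\binom{\ell}{t-1}$, specifically the bookkeeping of the index ranges and the vanishing boundary terms at $k=\ell+1$ and $k=t-1$ after the Pascal expansion. Once the difference formula is in hand, the monotonicity of $\binom{\ell}{t-1}$ and the resulting "decrease-then-increase" shape are entirely routine, so no real obstacle remains beyond this computation.
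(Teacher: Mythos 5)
Your proof is correct and follows essentially the same route as the paper: the paper computes the ratio $f(\ell+1)/f(\ell)=1+\bigl(\binom{\ell}{t-1}-(m-1)\bigr)/\bigl(2(\sum_{k=t}^{\ell}\binom{\ell}{k}+m-1)\bigr)$, while you compute the difference $f(\ell+1)-f(\ell)=2^{n-\ell-1}\bigl(\binom{\ell}{t-1}-(m-1)\bigr)$, but both reduce the sign of the increment to the sign of $\binom{\ell}{t-1}-(m-1)$ and then invoke the monotonicity of $\binom{\ell}{t-1}$ in $\ell$ to get the valley shape and conclude that the maximum is attained at an endpoint.
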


\begin{proof}
For $\ell\in [t,n-1]$, we have 
$$
\frac{f(\ell+1)}{f(\ell)}= \frac{2\sum_{k=t}^{\ell}\binom{\ell}{k}  +\binom{\ell}{t-1}+ m - 1}{2\left(\sum_{k=t}^{\ell}\binom{\ell}{k}  + m - 1\right)}=1+\frac{\binom{\ell}{t-1}-( m - 1)}{2\left(\sum_{k=t}^{\ell}\binom{\ell}{k}  + m - 1\right)}.
$$
It follows that $f(\ell+1)\geq  f(\ell)$ if and only if $\binom{\ell}{t-1}\geq ( m - 1)$. Since $\binom{\ell}{t-1}$ is increasing with respect to $\ell$, we infer that $f(\ell)\leq\max\{f(t), f(n)\}.$
\end{proof}

Now we are ready to prove our main result.

\begin{proof}[Proof of Theorem \ref{ma1}]
Let $ \mathcal{A}_1, \mathcal{A}_2, \ldots, \mathcal{A}_m \subseteq 2^{[n]}$ be non-empty pairwise cross $t$-intersecting families with 
$\sum_{i=1}^{m} |\mathcal{A}_i|$ maximal. 
 By the maximality of 
$\|\vec{\mathcal{A}}\| = \sum_{i=1}^{m} |\mathcal{A}_i|$,   $\vec{\mathcal{A}}$ is monotone.
By Lemma \ref{G23}, we may assume that $\vec{\mathcal{A}} = (\mathcal{A}_1, \ldots, \mathcal{A}_m)$ is shifted. It suffices to prove the theorem under the shifted condition.  In the equality case, a direct inspection of
the inverse shifts of the extremal configurations obtained below shows that
the original sequence is isomorphic to its shifted image.

Let $\vec{\mathcal{G}} = (\mathcal{G}_1, \ldots, \mathcal{G}_m)$ be the generating sequence of $\vec{\mathcal{A}}$.
Furthermore, let $\ell$ be the extent of $\vec{\mathcal{A}}$.
Clearly, we have $\ell\geq  t$. 

For the case $t=1$, we have  $|\mathcal{A}_i|+|\mathcal{A}_{j}|\leq 2^{n}$ for any $i\neq j$ because $\mathcal{A}_i$ and $\mathcal{A}_j$ are non-empty cross intersecting. Then $|\mathcal{A}_i|+|\mathcal{A}_{i+1}|\leq 2^{n}$ for every $i\in [m-1]$, and $|\mathcal{A}_m|+|\mathcal{A}_{1}|\leq 2^{n}$. This yields 
$$2\|\vec{\mathcal{A}}\|=\sum_{i=1}^{m-1}(|\mathcal{A}_i|+|\mathcal{A}_{i+1}|)+|\mathcal{A}_m|+|\mathcal{A}_{1}|\leq m2^{n}. $$
Hence, we get  
$\|\vec{\mathcal{A}}\|\leq m2^{n-1}= m M(n, 1)$, 
as required. 

In what follows, we consider the case $t>1$. 
If $\ell=t$, then $\mathcal{G}_1=\cdots=\mathcal{G}_m=\{[t]\}$.
Thus, we get $\|\vec{\mathcal{A}}\|=m2^{n-t}\leq m M(n, t)$. Moreover, the equality holds if and only if $n=t$ or
$n=t+1$. These give $\mathcal{K}(t,t)$ and $\mathcal{K}'(t+1,t)$, respectively. 
From now on, we assume that $\ell>t>1$.

If $\mathcal{G}_i[\ell]^{(t)}\neq \emptyset$ for some $i\in [m]$, then $[t-1]\cup\{\ell\}\in \mathcal{G}_i\subseteq \mathcal{A}_i$. By the shiftedness, we get 
$
\{[t-1]\cup \{j\}: j\in [t,\ell]\}\subseteq \mathcal{A}_i.
$
Using the cross $t$-intersecting property, we obtain $[\ell] \subseteq B$ for 
any $B\in \mathcal{A}_k$ with $k\neq i$.
By the definition of $\ell$, we infer that $\mathcal{G}_k=\{[\ell]\}$ for all $k\neq i$. It follows from the monotonicity that $\mathcal{A}_k=S(n,\ell)$ for all $k\neq i$. By the maximality of 
$\|\vec{\mathcal{A}}\|$, we get $\mathcal{A}_i=R(n,\ell)$.  
Invoking Lemma \ref{le4}, we deduce that
$$\|\vec{\mathcal{A}}\|=|R(n,\ell)|+(m-1)|S(n,\ell)|\leq  \max \left\{ \sum_{k=t} ^{n}\binom{n}{k}  + m - 1, \, m 2^{n-t} \right\}.$$
Since
$
2^{n-t}\le M(n,t)
$ 
and 
$\ell>t$, the result follows.

Next, we assume that $\vec{\mathcal{G}}[\ell]^{(t)}=(\emptyset,\ldots,\emptyset)$.  
We first prove the theorem in the case where $\ell$ is minimal.
By the maximality of 
$\|\vec{\mathcal{A}}\|$ and  the minimality of  $\ell$, 
it follows from Lemma \ref{le22} that $2\mid (\ell+t)$. Moreover, we may assume that $\vec{\mathcal{G}}[\ell]= \vec{\mathcal{G}}[\ell]^{(\frac{\ell+t}{2})}\neq (\emptyset,\ldots,\emptyset)$.  Then there exists some $i\in [m]$ such that 
$\mathcal{G}_i[\ell]=\mathcal{G}_i[\ell]^{(\frac{\ell+t}{2})}\neq \emptyset$. 
Let  $s$ be the symmetric extent of $\vec{\mathcal{A}}$.
Then Lemma \ref{le34} implies $\mathcal{A}_{k}^{-s, i}=\emptyset$ for all $i\in [t-1]$ and $k\in [m]$.

There are two cases in the remaining proof: $s\geq  \ell$; or $s< \ell$. 

{\bf Case 1.}~  If $s\geq  \ell$, then $\mathcal{G}_i=\binom{[\ell]}{\frac{\ell+t}{2}}$. 
This implies that
$\mathcal{A}_i  =\left\{A \subseteq [n]:|A \cap[\ell]| \geq  \frac{\ell+t}{2}\right\}.$
Since $\vec{\mathcal{A}}$ is pairwise cross $t$-intersecting and $\|\vec{\mathcal{A}}\|$ is maximal, we infer that $\mathcal{A}_k  =\{A \subseteq [n]:|A \cap[\ell]| \geq$ $\frac{\ell+t}{2} \}$ for all $k\neq i$.
We define 
$g(\ell)= |\left\{A \subseteq [n]:|A \cap[\ell]| \geq  \frac{\ell+t}{2}\right\}|$ for $\ell\in [t+1,n]$.
By a simple calculation, we obtain 
$$
g(\ell)\leq \begin{cases}g(n)& \text { if } 2\mid (n+t); \\ g(n-1) &  \text { if } 2\nmid (n+t).\end{cases}
$$ 
It is immediate that 
$\|\vec{\mathcal{A}}\|= m g(\ell)\leq  m M(n, t)$, 
where the equality holds if and only if $\mathcal{A}_1=\cdots= \mathcal{A}_m= \mathcal{K}(n,t)$ for $2\mid (n+t)$  and $\mathcal{A}_1=\cdots= \mathcal{A}_m= \mathcal{K}'(n,t)$ for $2\nmid (n+t)$.

{\bf Case 2.}~  If $s< \ell$, then applying Lemma \ref{le5} leads to $\ell=n$. 
For every $i\in[m]$, let $\mathcal{A}_i'=\{A\cup \{n+1\}: A\in \mathcal{A}_i \}$ and 
$\mathcal{A}_i''=\mathcal{A}_i\cup \mathcal{A}_i'$.
Then $\vec{\mathcal{A}}'' = (\mathcal{A}_1'', \ldots, \mathcal{A}_m'')\in (2^{[n+1]})^{m}$ is a non-empty monotone shifted pairwise cross $t$-intersecting sequence with its extent $n$ and symmetric extent $s$.  Furthermore, we have $\mathcal{A}_{k}''^{-s, i}=\emptyset$ for all $i\in [t-1]$ and $k\in [m]$.
Since $s<n<n+1$, $t>1$ and $2\mid (n+t)$,  we can  apply Lemma \ref{le5} to obtain a non-empty monotone pairwise cross $t$-intersecting sequence $\vec{\mathcal{B}} \in (2^{[n+1]})^{m}$ with its extent at most $n$ satisfying $\|\vec{\mathcal{A}}''\|<\|\vec{\mathcal{B}}\|$.
Since the extent of $\vec{\mathcal{B}}$ is at most $n$, we infer that $\|\vec{\mathcal{B}}[n+1]\|\leq\|\vec{\mathcal{B}}(\overline{n+1})\|$. 
Therefore, we have 
$$\|\vec{\mathcal{B}}(\overline{n+1})\|\geq  \frac{1}{2}\|\vec{\mathcal{B}}\|> \frac{1}{2}\|\vec{\mathcal{A}}''\|=\|\vec{\mathcal{A}}\|.$$
Moreover, we see that $\vec{\mathcal{B}}(\overline{n+1})\in (2^{[n]})^{m}$
is a non-empty  pairwise cross $t$-intersecting sequence,  which leads to a contradiction with the maximality of $\vec{\mathcal{A}}$. 

Having completed the proof for the case where $\ell$ is minimal and $\vec{\mathcal{G}}[\ell]= \vec{\mathcal{G}}[\ell]^{(\frac{\ell+t}{2})}\neq (\emptyset,\ldots,\emptyset)$, we now consider the case where $\ell$ is minimal and $\vec{\mathcal{G}}[\ell]\neq \vec{\mathcal{G}}[\ell]^{(\frac{\ell+t}{2})}$.
Recall that $\vec{\mathcal{G}}[\ell]^{(t)}=(\emptyset,\ldots,\emptyset)$ and $\ell>t$. 
By Lemma \ref{le22} and the minimality of $\ell$, we have that $2\mid (\ell+t)$ and 
there exists a non-empty monotone shifted pairwise cross $t$-intersecting sequence $\vec{\mathcal{B}}$  with  extent $\ell$ and 
generating sequence $\vec{\mathcal{G}}'$ 
such that $\|\vec{\mathcal{B}}\|\geq \|\vec{\mathcal{A}}\|$ and $\vec{\mathcal{G}}'[\ell]= \vec{\mathcal{G}}[\ell]^{(\frac{\ell+t}{2})}$.
Applying the above conclusion to  $\vec{\mathcal{G}}'$ yields 
$\vec{\mathcal{G}}[\ell]^{(\frac{\ell+t}{2})}=\Big(\binom{[\ell]}{\frac{\ell+t}{2}}[\ell], \ldots,\binom{[\ell]}{\frac{\ell+t}{2}}[\ell] \Big)$.
Since $\mathcal{G}_i$ is an antichain for each $i\in [m]$, we deduce that $\vec{\mathcal{G}}[\ell]= \vec{\mathcal{G}}[\ell]^{(\frac{\ell+t}{2})}$, a contradiction.
We have shown that  when $\ell$ is minimal, $\ell=n$ if $2\mid (n+t)$ and $\ell=n-1$ if $2\nmid (n+t)$.
If $\ell$ is not minimal, then $\ell=n$ and $2\nmid (n+t)$.
Thus, we have $ \vec{\mathcal{G}}= \vec{\mathcal{A}}$.
Since $2\nmid (n+t)$ and $\|\vec{\mathcal{A}}\|$ is maximal, 
 Lemma \ref{le22} implies that  for all pairs $u, v\in [t,n]$ with  $u+v=n+t$, either $\vec{\mathcal{G}}[n]^{(u)}\neq (\emptyset,\ldots, \emptyset)$ and $\vec{\mathcal{G}}[n]^{(v)}\neq (\emptyset,\ldots, \emptyset)$, or $\vec{\mathcal{G}}[n]^{(u)}=\vec{\mathcal{G}}[n]^{(v)}=(\emptyset,\ldots, \emptyset)$. Furthermore, there exists a non-empty pairwise cross $t$-intersecting sequence $\vec{\mathcal{B}}$  with extent at most $n-1$  such that $\|\vec{\mathcal{B}}\|\geq \|\vec{\mathcal{A}}\|$.
Moreover,  the generating sequence of  $\vec{\mathcal{B}}$ is constructed by repeatedly applying the following operation to $\vec{\mathcal{G}}$: 
for all pairs 
 $u, v\in [t,n]$ with  $u+v=n+t$ such that both $\vec{\mathcal{G}}[n]^{(u)}$ and $\vec{\mathcal{G}}[n]^{(v)}$ are  not equal to  $(\emptyset,\ldots, \emptyset)$, remove $\vec{\mathcal{G}}[n]^{(u)}$ and $ \vec{\mathcal{G}}[n]^{(v)}$, and add either $\vec{\mathcal{G}}[n]^{(v)}(n)$ or $\vec{\mathcal{G}}[n]^{(u)}(n)$.
All options must lead eventually to the same  sequence $\vec{\mathcal{B}}=(\mathcal{K}'(n,t), \ldots, \mathcal{K}'(n,t))$, and this only happen if $\vec{\mathcal{G}}[n]^{(u)}=\vec{\mathcal{G}}[n]^{(v)}=(\emptyset,\ldots, \emptyset)$ for all pairs 
 $u, v\in [t,n]$ with  $u+v=n+t$, contradicting with $\vec{\mathcal{G}}[n]\neq (\emptyset,\ldots, \emptyset)$.
\end{proof}

\section*{Acknowledgement}
The authors would like to thank Yang Huang and Yuejian Peng for sharing the references \cite{Huang2025} prior to its publication.  
The authors also thank the referee for the valuable comments  which improved the presentation of the manuscript. 
Y. Li as the corresponding author was supported by the program of Tsinghua University,
 T. Wu as the corresponding author was supported by NSFC (No. 12261071) and NSF of Qinghai Province (No. 2025-ZJ-902T),  
 and L. Feng was supported by the NSFC (Nos. 12271527 and 12471022).

\end{document}